\documentclass[12pt]{amsart}       
\usepackage{txfonts}
\usepackage{extarrows}
\usepackage{amssymb}
\usepackage{eucal}
\usepackage{bbm}
\usepackage{graphicx}
\usepackage{amsmath}
\usepackage{amscd}
\usepackage[all]{xy}           
\usepackage{amsfonts,latexsym}
\usepackage{xspace}
\usepackage{epsfig}
\usepackage{float}
\usepackage{color}
\usepackage{shuffle}
\usepackage{fancybox}
\usepackage{colordvi}
\usepackage{multicol}
\usepackage{colordvi}
\usepackage{mathrsfs}
\usepackage{ifpdf}
\ifpdf
  \usepackage[colorlinks,final,backref=page,hyperindex]{hyperref}
\else
  \usepackage[colorlinks,final,backref=page,hyperindex,hypertex]{hyperref}
\fi
\usepackage[active]{srcltx} 
\usepackage{tikz}
\usepackage{graphicx}

\newtheorem{theorem}{Theorem}[section]
\newtheorem{proposition}[theorem]{Proposition}
\newtheorem{lemma}[theorem]{Lemma}

\newtheorem{prop-def}{Proposition-Definition}[section]
\newtheorem{coro-def}{Corollary-Definition}[section]

\theoremstyle{definition}
\newtheorem{definition}[theorem]{Definition}
\newtheorem{remark}[theorem]{Remark}

\newcommand{\nc}{\newcommand}
\nc{\tred}[1]{\textcolor{red}{#1}}
\nc{\tblue}[1]{\textcolor{blue}{#1}}
\nc{\tgreen}[1]{\textcolor{green}{#1}}
\nc{\tpurple}[1]{\textcolor{purple}{#1}}
\nc{\btred}[1]{\textcolor{red}{\bf #1}}
\nc{\btblue}[1]{\textcolor{blue}{\bf #1}}
\nc{\btgreen}[1]{\textcolor{green}{\bf #1}}
\nc{\btpurple}[1]{\textcolor{purple}{\bf #1}}
\nc{\NN}{{\mathbb N}}
\nc{\ncsha}{{\mbox{\cyr X}^{\mathrm NC}}} \nc{\ncshao}{{\mbox{\cyr
X}^{\mathrm NC}_0}}

\newcommand{\delete}[1]{}

\nc{\mlabel}[1]{\label{#1}}
\nc{\mcite}[1]{\cite{#1}}
\nc{\mref}[1]{\ref{#1}}
\nc{\meqref}[1]{\eqref{#1}}
\nc{\mbibitem}[1]{\bibitem{#1}}

\delete{
\nc{\mlabel}[1]{\label{#1}{\hfill \hspace{1cm}{\bf{{\ }\hfill(#1)}}}}
\nc{\mcite}[1]{\cite{#1}{{\bf{{\ }(#1)}}}}
\nc{\mref}[1]{\ref{#1}{{\bf{{\ }(#1)}}}}
\nc{\meqref}[1]{\eqref{#1}{{\bf{{\ }(#1)}}}}
\nc{\mbibitem}[1]{\bibitem[\bf #1]{#1}}
}
\nc{\sha}{{\mbox{\cyr X}}}  
\newfont{\scyr}{wncyr10 scaled 550}
\nc{\ssha}{\mbox{\bf \scyr X}}
\nc{\shap}{{\mbox{\cyrs X}}} 
\nc{\shpr}{\diamond}    
\nc{\shp}{\ast} \nc{\shplus}{\shpr^+}
\nc{\shprc}{\shpr_c}    
\nc{\dep}{\mrm{dep}} \nc{\lc}{\lfloor} \nc{\rc}{\rfloor}
\nc{\db}{\leq_{\rm db}} \nc{\bfk}{{\bf k}}

\nc{\cala}{{\mathcal A}} \nc{\calb}{{\mathcal B}}
\nc{\calc}{{\mathcal C}}
\nc{\cald}{{\mathcal D}} \nc{\cale}{{\mathcal E}}
\nc{\calf}{{\mathcal F}} \nc{\calg}{{\mathcal G}}
\nc{\calh}{{\mathcal H}} \nc{\cali}{{\mathcal I}}
\nc{\call}{{\mathcal L}} \nc{\calm}{{\mathcal M}}
\nc{\caln}{{\mathcal N}} \nc{\calo}{{\mathcal O}}
\nc{\calp}{{\mathcal P}} \nc{\calr}{{\mathcal R}}
\nc{\cals}{{\mathcal S}} \nc{\calt}{{\mathcal T}}
\nc{\calu}{{\mathcal U}} \nc{\calw}{{\mathcal W}} \nc{\calk}{{\mathcal K}}
\nc{\calx}{{\mathcal X}} \nc{\CA}{\mathcal{A}}

\nc{\fraka}{{\mathfrak a}} \nc{\frakA}{{\mathfrak A}}
\nc{\frakb}{{\mathfrak b}} \nc{\frakB}{{\mathfrak B}}
\nc{\frakc}{{\mathfrak c}}
\nc{\frakD}{{\mathfrak D}} \nc{\frakF}{\mathfrak{F}}
\nc{\frakf}{{\mathfrak f}} \nc{\frakg}{{\mathfrak g}}
\nc{\frakH}{{\mathfrak H}} \nc{\frakL}{{\mathfrak L}}
\nc{\frakM}{{\mathfrak M}} \nc{\bfrakM}{\overline{\frakM}}
\nc{\frakm}{{\mathfrak m}} \nc{\frakP}{{\mathfrak P}}
\nc{\frakN}{{\mathfrak N}} \nc{\frakp}{{\mathfrak p}}
\nc{\frakS}{{\mathfrak S}} \nc{\frakT}{\mathfrak{T}}
\nc{\frakX}{{\mathfrak X}}

\font\cyr=wncyr10 \font\cyrs=wncyr7
\nc{\li}[1]{\textcolor{blue}{Nan:#1}}
\nc{\lir}[1]{\textcolor{red}{Li:#1}}
\nc{\yi}[1]{\textcolor{blue}{Yi: #1}}
\nc{\xing}[1]{\textcolor{purple}{Xing:#1}}
\nc{\revise}[1]{\textcolor{red}{#1}}
\nc{\nan}[1]{\textcolor{blue}{Nan:#1}}

\numberwithin{equation}{section}
\nc{\RR}{\mathbb{R}}
\nc{\X}{{\bf X}}
\nc{\E}{{\bf E}}
\nc{\x}{\mathbb{X}}
\nc{\C}{\mathcal{C}^{\alpha}}
\nc{\D}{\mathcal{D}^{\alpha}}
\nc{\CC}{\mathcal{C}_{\X}^{\alpha}}
\nc{\f}{\varphi}
\nc{\al}{\alpha}
\nc{\lbar}{\overline}
\nc{\HA}{\mathbb{S}}
\nc{\ha}{\mathcal{S}}

\nc{\V}{V} \nc{\pro}{\otimes}
\nc{\tng}{T^{\le N}(V)^{g}} \nc{\tn}{T^{\le N}(V)}
\nc{\ttg}{T^{\le 3}(V)^{g}}
\nc{\ZZ}{\mathbb{Z}} \nc{\etree}{1}
\nc{\xx}{\mathcal{X}}
\nc{\RP}{{\mathcal{D}}^{\alpha}([0, T]^2, V)}
\nc{\Y}{{\bf Y}}\nc{\id}{\text{id}} \nc{\Id}{\text{Id}}\nc{\Z}{{\bf Z}}
\nc{\sym}{\operatorname{Sym}} \nc{\tri}{\operatorname{Sym}}

\begin{document}

\title[Jump It\^o-type formula with arbitrary regularity]{Jump It\^o-type formula with arbitrary regularity}
%
%
\author{Nannan Li}
\address{School of Mathematics and Statistics, Lanzhou University
Lanzhou, 730000, China
}
\email{linn2024@lzu.edu.cn}

\author{Xing Gao$^{*}$}\thanks{*Corresponding author}
\address{School of Mathematics and Statistics, Lanzhou University
Lanzhou, 730000, China; Gansu Provincial Research Center for Basic Disciplines of Mathematics
and Statistics, Lanzhou, 730070, China
}
\email{gaoxing@lzu.edu.cn}
\begin{abstract}
We establish an It\^o-type formula for finite $p$-variation paths with jumps for arbitrary $p\geq 1$. The formula is stated in a fully pathwise form and separates the reduced rough integral from explicit left- and right-jump correction terms. 
In the c\`adl\`ag case, only the left-jump correction remains, while in the continuous case, both jump correction terms vanish and the formula recovers the corresponding
continuous arbitrary-regularity change-of-variable formula. 
The proof is based on the reduced rough path framework and a refinement Riemann--Stieltjes convergence criterion adapted to discontinuous paths. 
This approach allows us to handle the higher-order Taylor expansions required for large values of $p$ and to control the interaction between rough increments and discrete jumps. 

As applications, we derive It\^o-type formulas for stochastic processes whose sample paths have finite $p$-variation, including pure-jump models and mixed fractional Brownian--jump signals. The latter class includes cases with Hurst parameter $H\leq 1/3$, which fall outside the regime $2\leq p<3$. We also obtain chain-rule identities for nonlinear observables of c\`adl\`ag finite-$p$-variation solutions of random differential equations with jumps, together with a pathwise log-wealth decomposition.
\end{abstract}

\makeatletter
\@namedef{subjclassname@2020}{\textup{2020} Mathematics Subject Classification}
\makeatother
\subjclass[2020]{
60L20, 
60H99, 
60L90. 
}

\keywords{It\^o-type formula, reduced rough path, rough integral, jump path. }

\maketitle

\tableofcontents

\setcounter{section}{0}

\allowdisplaybreaks

\section{Introduction}\label{sec:introduction}

\subsection{Motivation}\label{subsec:intro_motivation}

The It\^o formula is one of the central tools of stochastic analysis. It extends the classical chain rule to irregular stochastic dynamics and provides the basic mechanism for transforming functions of stochastic processes. In its classical form, for a Brownian motion $B$ and a twice continuously differentiable function $f$, the formula contains the additional second-order correction term
\[
df(B_t)=f'(B_t)\,dB_t+\frac12 f''(B_t)\,dt,
\]
which can be expressed in the following integral form:
\begin{equation*}
f(B_T) - f(B_0) = \int_0^T f'(B_t) dB_t + \frac{1}{2} \int_0^T f''(B_t) dt.
\end{equation*}
Here the first integral is understood in the It\^o sense \cite{Ito}. This identity has become a fundamental ingredient in the theory of semimartingales.
From this viewpoint, the classical It\^o formula may be regarded as a stochastic change-of-variable formula. In the present paper, by an It\^o-type formula we mean a pathwise change-of-variable identity, formulated through the reduced rough integral, in which the correction terms are determined by the roughness and the jumps of the driving path.
However, many signals appearing in modern probability, mathematical finance, physics, and stochastic modelling are not adequately described by the classical
semimartingale framework.

Some of them have low H\"older regularity, while others have discontinuities caused by shocks, impulses, or rare events. These features motivate the development of change-of-variable formulas which are pathwise and which do not depend on a probabilistic martingale structure.

Rough path theory, introduced by Lyons \cite{Ly98}, provides a powerful framework for such problems. By enriching an irregular path with suitable higher-order information, rough path theory makes it possible to define integration and differential equations driven by signals far beyond the range of classical calculus. A pathwise It\^o-type formula for rough signals is therefore a natural and important problem.

\subsection{Background and related work}\label{subsec:intro_related_work}

There are two main lines of research related to the present paper. The first line concerns continuous rough paths. Hairer and Kelly studied It\^o-type formulas using branched rough paths \cite{HK15,Ke12}. Friz and Hairer presented a reduced rough path approach in the regime of continuous paths with H\"older regularity greater than $1/3$ \cite{FH20}. Cont and Perkowski developed a pathwise integration theory and change-of-variable formulas for continuous finite $p$-variation paths with arbitrary $p\geq 1$ using reduced rough paths \cite{CP19}. Further developments have been obtained through quasi-geometric rough paths \cite{Be23}, planarly branched rough paths \cite{LG25}, reduced rough paths in lower regularity regimes \cite{LG26}, and recent refinements of branched It\^o formulas \cite{BFT26}.

The second line concerns paths with jumps. Discontinuous paths are essential in applications, since they naturally model sudden shocks and discrete events. In this direction, Friz and Zhang developed a theory of rough differential equations driven by rough paths with jumps and obtained an It\^o-type formula for finite $p$-variation paths with jumps in the regime $2\leq p<3$ \cite{FZ18}. In this range, the second-order Taylor expansion is sufficient.

The arbitrary $p$ case for jump paths is more delicate. When $p$ becomes larger, the path may be rougher, and Taylor expansions up to order $\lfloor p\rfloor$ are required. At the same time, the discontinuities of the path create additional correction terms. Thus, one has to control both the high-order rough increments and their interaction with left and right jumps. This is the gap addressed in the present paper.

\subsection{Aim and main result}\label{subsec:intro_aim}

The purpose of this paper is to prove an It\^o-type formula for finite $p$-variation paths with jumps for arbitrary $p\geq 1$. Our formula is fully pathwise and is formulated in terms of the reduced rough integral. 
It extends the jump It\^o-type formula in~\cite{FZ18} from the range $2\leq p<3$ to all $p\geq 1$, and it extends the continuous pathwise change-of-variable formula in ~\cite{CP19} to discontinuous paths.

Let $n=\lfloor p\rfloor$. The identity proved in Theorem~\ref{thm:main} takes the form
\begin{align*}
F(X_T)-F(X_0)
={}&
\int_0^T DF(X_t)\,dX_t  
+\sum_{0<t\leq T}
\left(
F(X_t)-F(X_{t-})
-\sum_{k=1}^{n}\frac{1}{k!}D^kF(X_{t-})(\Delta_t^-X)^{\otimes k}
\right) \\
&+
\sum_{0\leq t<T}
\left(
F(X_{t+})-F(X_t)
-\sum_{k=1}^{n}\frac{1}{k!}D^kF(X_t)(\Delta_t^+X)^{\otimes k}
\right),
\end{align*}
where $\Delta_t^-X=X_t-X_{t-}$ and $\Delta_t^+X=X_{t+}-X_t$. The first term on the right-hand side of the above formula is the reduced rough integral, defined through a refinement Riemann--Stieltjes limit. The remaining two terms are explicit left- and right-jump corrections.

In the c\`adl\`ag case, the right-jump correction vanishes, so the identity contains only the left-jump correction. In the continuous case, both jump correction terms vanish, and the formula recovers the continuous reduced rough path change-of-variable formula.

\subsection{Main contributions}\label{subsec:intro_contributions}

The main contributions of this paper are the following.  

\begin{enumerate}
\item We establish a pathwise It\^o-type formula for finite $p$-variation paths with jumps for every $p\geq 1$. This extends the jump It\^o-type formula in~\cite{FZ18} from the regime $2\leq p<3$ to all $p \ge 1$ (Theorem~\ref{thm:main}).


\item We construct the corresponding reduced rough integral in the presence of jumps (Proposition~\ref{prop:2}). Since mesh-based convergence is not well adapted to discontinuous paths, the integral is formulated through the refinement Riemann--Stieltjes convergence criterion, following the classical idea of refinement integration \cite{Hi38,Hi63} and the sewing argument for jump paths in \cite{FZ18}.

\item We identify the exact jump correction terms in the arbitrary-regularity setting. These correction terms contain all Taylor contributions up to order $\lfloor p\rfloor$ and therefore describe precisely how high-order rough increments interact with discrete jumps.

 \item We show that the jump It\^o-type formula can be applied directly to stochastic models whose sample paths have finite $p$-variation (Proposition~\ref{prop:as_stochastic_formula}). A key example is the mixed fractional Brownian--jump signal
\[
X_t=X_0+B_t^H+J_t,
\]
for which Proposition~\ref{prop:mixed_driver_application} yields an almost sure It\^o-type formula for every $p>\max\{1/H,q\}$ whenever $J$ has finite $q$-variation. In particular, when $H\leq 1/3$, this application lies beyond the regime $2\leq p<3$ covered by previous jump formulas. We also obtain formulas for pure-jump L\'evy-type processes (Proposition~\ref{prop:levy_driver_application}), chain-rule identities for observables of random differential equations with jumps (Proposition~\ref{prop:random_equation_application}), and pathwise log-wealth decompositions in mathematical finance (Proposition~\ref{prop:finance_application}).
\end{enumerate}

\subsection{Applications and scope}\label{subsec:intro_applications}

The applications are an important motivation for the present work. Since Theorem~\ref{thm:main} is deterministic, it can be applied path by path to stochastic processes whose paths have finite $p$-variation. This gives almost sure It\^o-type formulas without requiring the process to be a semimartingale.

This point is particularly useful for mixed models. For example, a signal consisting of a fractional Brownian component and a jump component can be treated within the same formula. When the Hurst parameter is small, the corresponding variation index may satisfy $p>3$, which lies outside the range covered by the jump formula in \cite{FZ18}. Thus, the arbitrary-regularity feature of Theorem~\ref{thm:main} is essential for such models.

The formula also applies naturally to random differential equations with jumps. Once a solution is known to have finite $p$-variation, observables of the solution satisfy the same pathwise It\^o-type formula. In mathematical finance, applying the theorem to a positive wealth process and the logarithmic function gives a pathwise decomposition of cumulative log-return into a reduced rough integral term and an explicit jump correction.

\subsection{Organization of the paper}\label{subsec:intro_organization}

The rest of the paper is organized as follows. In Section~\ref{sec:2}, we recall the necessary background on reduced rough paths, controlled paths, and refinement Riemann--Stieltjes convergence, and we define the reduced rough integral for paths with jumps (Proposition~\ref{prop:2}). In Section~\ref{sec:3}, we prove the main It\^o-type formula for jump paths of finite $p$-variation with arbitrary $p\ge 1$ (Theorem~\ref{thm:main}). In Section~\ref{sec:4}, we present applications to stochastic processes, representative stochastic drivers, random differential equations, and mathematical finance (Propositions~\ref{prop:as_stochastic_formula},~\ref{prop:levy_driver_application},~\ref{prop:mixed_driver_application},~\ref{prop:random_equation_application} and~\ref{prop:finance_application}).

\smallskip

{\bf Notation.} Let $(V, \|\cdot\|)$, $(W, \|\cdot\|)$, and $(U, \|\cdot\|)$ be Banach spaces. Let $X$ be a function from $\Delta_T:=\{(s,t) \mid 0\leq s\leq t \leq T\}$ into $V$. For any $p\in (0, \infty)$, we define finite $p$-variation,
\[\|X\|_{p\text{-var}; [0, T]}:=\Big(\sup_{\Pi}\sum_{[s, t]\in \Pi}\|X_{s, t}\|^p \Big)^{\frac{1}{p}} <\infty,\]
where the sup is taken over all partitions $\Pi$ of $[0, T]$. In particular, if $X:[0, T]\to V$, we define
$$X_{s, t}:=X_t-X_s.$$

For $A\in V$ and $x\in W$, we will use the notation $A = O(x)$ if there exists a constant $C\in \RR$ such that the bound $\|A\|\le C\|x\|$ holds.

\section{Reduced rough paths and integration with jumps}\label{sec:2}
In this section, we first review reduced rough paths and define reduced controlled rough paths with jumps. We then establish the reduced rough integral with jumps. We also recall an integral convergence criterion adapted to jump paths.

\subsection{Reduced rough path}
We review the setting of the symmetric tensor as described in~\cite{CP19}. Define
$$T_N(V) := \underbrace{V\otimes \cdots \otimes V}_{N}$$
as the space of $N$-tensors on $V$. For any permutation $\sigma$, we define a linear operator $\hat{\sigma}: T_N(V) \to T_N(V)$ by its action on simple tensors as follows.
$$\hat{\sigma}(v_1 \otimes v_2 \otimes \dots \otimes v_N) = v_{\sigma(1)} \otimes v_{\sigma(2)} \otimes \dots \otimes v_{\sigma(N)}.$$
$\text{Sym}_N(V)$ is defined as the subspace of fixed points under the action of the permutation group:
$$\text{Sym}_N(V) = \{ T \in T_N(V) \mid \hat{\sigma}(T) = T, \forall \sigma \in \mathfrak{S}_N \},$$
where $\mathfrak{S}_N$ denotes the group of permutations of $\{1, \dots, N\}$.
We set $\text{Sym}_0(V) := \RR$ and define $\mathbb{S}_N(V)$ as the direct sum of $\text{Sym}_k(V)$ for $k = 0, 1, \dots, N$:
\[\mathbb{S}_N(V) := \bigoplus_{k=0}^N \text{Sym}_k(V).\]

We also introduce jump control functions.
\begin{definition}
A \textit{jump control function} is a map $c: \Delta_T \to \mathbb{R}_+$ such that $c(t,t) = 0$ for all $t \in [0,T]$ and such that $c(s,u) + c(u,t) \le c(s,t)$ for all $0 \le s \le u \le t \le T$.
\mlabel{def:2}
\end{definition}
For paths with jumps, the jump control function is usually discontinuous at the jump points.
We shall use the following elementary fact relating finite $p$-variation to jump control functions.
\begin{lemma}
A function $F : [0,T] \to V$ has finite $p$-variation if and only if there exists a control function $c$ with $\|F(t) - F(s)\|^p \le c(s,t)$, and in that case $\|F\|_{p\textup{-var}; [0, T]} \le c(0,T)^{1/p}$.
\mlabel{lem:1}
\end{lemma}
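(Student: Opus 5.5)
The natural candidate for the control is the $p$-variation of $F$ restricted to subintervals,
\[
c(s,t):=\|F\|_{p\text{-var};[s,t]}^p=\sup_{\Pi\text{ partition of }[s,t]}\sum_{[u,v]\in\Pi}\|F(v)-F(u)\|^p ,
\]
with $c(t,t)=0$. The plan is to verify that this $c$ is a jump control function in the sense of Definition~\ref{def:2}, and then to check the two implications separately.

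\emph{Superadditivity.} Fix $s\le u\le t$, a partition $\Pi_1$ of $[s,u]$ and a partition $\Pi_2$ of $[u,t]$. Their concatenation $\Pi_1\cup\Pi_2$ is a partition of $[s,t]$, so
\[
\sum_{\Pi_1}\|F_{a,b}\|^p+\sum_{\Pi_2}\|F_{a,b}\|^p\le c(s,t).
\]
Taking the supremum over $\Pi_1$ and $\Pi_2$ independently gives $c(s,u)+c(u,t)\le c(s,t)$. This is all Definition~\ref{def:2} requires. In particular no continuity of $c$ is needed, and this is exactly why jump paths fit the framework.

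\emph{The bound and the forward implication.} Using the trivial one-interval partition $\{[s,t]\}$ gives $\|F(t)-F(s)\|^p\le c(s,t)$. If $F$ has finite $p$-variation, then $c(0,T)=\|F\|^p_{p\text{-var};[0,T]}<\infty$. By superadditivity, $c(s,t)\le c(0,s)+c(s,t)+c(t,T)\le c(0,T)$, so $c$ is finite-valued and is a genuine control $\Delta_T\to\mathbb R_+$.

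\emph{Converse and norm bound.} Suppose some jump control $c$ satisfies $\|F(t)-F(s)\|^p\le c(s,t)$. Take any partition $0=t_0<\dots<t_m=T$. Applying superadditivity inductively gives $\sum_i c(t_i,t_{i+1})\le c(0,T)$, hence
\[
\sum_i\|F(t_{i+1})-F(t_i)\|^p\le \sum_i c(t_i,t_{i+1})\le c(0,T).
\]
Taking the supremum over partitions yields $\|F\|_{p\text{-var};[0,T]}\le c(0,T)^{1/p}<\infty$.

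The only point requiring care is finiteness of the candidate control $c(s,t)$ in the forward direction. This is handled by the monotonicity $c(s,t)\le c(0,T)$ noted above, so there is no real obstacle.
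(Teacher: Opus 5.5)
Your proof is correct and follows the paper's argument: the converse bounds each increment by $c(t_i,t_{i+1})$ and sums using superadditivity, and the forward direction takes $c(s,t)=\|F\|^p_{p\text{-var};[s,t]}$, getting superadditivity by concatenating partitions of $[s,u]$ and $[u,t]$. You also make explicit two points the paper leaves implicit: the bound $\|F(t)-F(s)\|^p\le c(s,t)$, which comes from the one-interval partition, and the finiteness of $c$, which follows from $c(s,t)\le c(0,T)$.
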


\begin{proof}
For the sufficiency, suppose that there exists a control function $c(s, t)$ such that $\|F(t) - F(s)\|^p \le c(s, t)$.
Then, for any partition $\Pi = \{0 = t_0 < t_1 < \dots < t_n = T\}$,
\begin{equation*}
\sum_{i=0}^{n-1} \|F(t_{i+1}) - F(t_i)\|^p \le \sum_{i=0}^{n-1} c(t_i, t_{i+1}).
\end{equation*}
By Definition~\ref{def:2}, we have
\begin{equation*}
\sum_{i=0}^{n-1} c(t_i, t_{i+1}) \le c(t_0, t_n) = c(0, T).
\end{equation*}
Since this inequality holds for any partition $\Pi$, according to the definition of $p$-variation, we have
$$\|F\|_{p\text{-var}; [0, T]}^p = \sup_{\Pi} \sum_{[t_i, t_{i+1}]\in \Pi} \|F(t_{i+1}) - F(t_i)\|^p \le c(0, T).$$
Taking the $p$-th root on both sides gives
$$\|F\|_{p\text{-var}; [0, T]} \le c(0, T)^{1/p}.$$
Therefore, $F$ has finite $p$-variation.\\

For the necessity, assume that $F$ has finite $p$-variation. Let
$$c(s, t) := \|F\|_{p\text{-var}; [s, t]}^p.$$
We now verify that $c(s, t)$ satisfies the definition of a jump control function. Obviously,
$$c(s, s) = \|F\|_{p\text{-var}; [s, s]}^p = 0.$$

Note that for any $s \le u \le t$, $\|F\|_{p\text{-var}; [s, u]}^p$ is the upper bound of the partition taken over the interval $[s, u]$, and $\|F\|_{p\text{-var}; [u, t]}^p$ is similarly the upper bound of the partition over the interval $[u, t]$. For any partition $\Pi_1$ of $[s, u]$ and any partition $\Pi_2$ of $[u, t]$, their union $\Pi_1 \cup \Pi_2$ is a partition of $[s, t]$ that contains the point $u$.
Let $\Pi$ be any partition of $[0, T]$, then
$$\sum_{[t_i, t_{i+1}]\in \Pi_1} \|F(t_{i+1}) - F(t_i)\|^p + \sum_{[t_i, t_{i+1}]\in \Pi_2} \|F(t_{i+1}) - F(t_i) \|^p \le \sup_{\Pi } \sum_{[t_i, t_{i+1}]\in \Pi} \|F(t_{i+1}) - F(t_i) \|^p = c(s, t).$$
“Taking suprema on the two terms on the left-hand side, we obtain
\[c(s, u) + c(u, t) \le c(s, t).\qedhere\]
\end{proof}

We next recall the notion of reduced rough paths. \vspace{0.1in}

\begin{definition}~\cite[Definition 4.6]{CP19}
Let $p \ge 1$. A \textit{reduced rough path of finite p-variation} is a tuple
\[
\x = (1, \x^1, \dots, \x^{\lfloor p \rfloor}) : \Delta_T \to \mathbb{S}_{\lfloor p \rfloor}(V),
\]
such that
\begin{enumerate}
\item[(i)] there exists a control function $c$ with
\[\sum_{k=1}^{\lfloor p \rfloor} \|\mathbb{X}_{s,t}^k\|^{p/k} \le c(s,t), \quad (s,t) \in \Delta_T;\]

\item[(ii)] the \textit{reduced Chen relation holds}
\[\mathbb{X}_{s,t} = \text{Sym}(\mathbb{X}_{s,u} \otimes \mathbb{X}_{u,t}), \quad (s,u), (u,t) \in \Delta_T, \]
where the symmetric part of $T \in T_k(V)$ is defined as
\[\text{Sym}(T) := \frac{1}{k!} \sum_{\sigma \in \mathfrak{S}_k} \sigma T, \quad \sigma T(v_1, \dots, v_k) := T(v_{\sigma (1)}, \dots, v_{\sigma (k)})\]
with the group of permutations $\mathfrak{S}_k$ of $\{1, \dots, k\}$.
\end{enumerate}
\mlabel{def:1}
\end{definition}

We can construct a reduced rough path solely from a finite $p$-variation path $X$. This is the jump-path analogue of~\cite[Lemma 4.7]{CP19}.
\begin{proposition}
Let $p \ge 1$. Let $X: [0, T]\to V$ and $\|X\|_{p\textup{-var}; [0, T]}<\infty$. Denote
\[\x^k:=\frac{1}{k!}X^{\otimes k},\quad \forall k=1, \ldots, \lfloor p \rfloor.\]
Then $\x = (1, \x^1, \dots, \x^{\lfloor p \rfloor})$ is a reduced rough path of finite p-variation.
\mlabel{prop:1}
\end{proposition}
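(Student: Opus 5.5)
Here $X^{\otimes k}$ is read as the increment tensor $X_{s,t}^{\otimes k}$, that is, $\x^k_{s,t}=\frac{1}{k!}X_{s,t}^{\otimes k}$ with $X_{s,t}=X_t-X_s$. The plan is to verify the two conditions of Definition~\ref{def:1} directly. No continuity is used anywhere, so the argument of \cite[Lemma 4.7]{CP19} should carry over verbatim once Lemma~\ref{lem:1} supplies a jump control function in place of a continuous control.

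\emph{Algebraic part: the reduced Chen relation.} Each $\x^k_{s,t}$ lies in $\mathrm{Sym}_k(V)$, since $v^{\otimes k}$ is fixed by every permutation. Fix $s\le u\le t$ and write $a=X_{s,u}$, $b=X_{u,t}$, so that $X_{s,t}=a+b$. The degree-$k$ component of $\mathrm{Sym}(\x_{s,u}\otimes\x_{u,t})$ is
\[
\sum_{i+j=k}\frac{1}{i!\,j!}\,\mathrm{Sym}\bigl(a^{\otimes i}\otimes b^{\otimes j}\bigr).
\]
I will then use the symmetric binomial identity
\[
(a+b)^{\otimes k}=\sum_{i+j=k}\binom{k}{i}\mathrm{Sym}\bigl(a^{\otimes i}\otimes b^{\otimes j}\bigr),
\]
which holds because $(a+b)^{\otimes k}$ is symmetric: expand it into its $2^k$ words in $a,b$ and apply $\mathrm{Sym}$, which maps each word with $i$ letters $a$ to $\mathrm{Sym}(a^{\otimes i}\otimes b^{\otimes j})$. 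Dividing by $k!$ shows the degree-$k$ component equals $\frac{1}{k!}(a+b)^{\otimes k}=\x^k_{s,t}$. Here $\mathrm{Sym}$ is applied degreewise and the product is truncated at level $\lfloor p\rfloor$.

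\emph{Analytic part: the control bound.} Lemma~\ref{lem:1} gives a jump control function $c_0(s,t)=\|X\|^p_{p\text{-var};[s,t]}$ with $\|X_{s,t}\|^p\le c_0(s,t)$. I will assume the tensor norm is admissible, so that $\|v^{\otimes k}\|\le\|v\|^k$. Then
\[
\|\x^k_{s,t}\|^{p/k}\le (k!)^{-p/k}\|X_{s,t}\|^{p}\le c_0(s,t),
\]
and summing over $k$ gives $\sum_{k=1}^{\lfloor p\rfloor}\|\x^k_{s,t}\|^{p/k}\le \lfloor p\rfloor\, c_0(s,t)$. Set $c:=\lfloor p\rfloor c_0$. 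This is again a jump control function in the sense of Definition~\ref{def:2}, because positive multiples preserve both $c(t,t)=0$ and superadditivity. This proves condition (i).

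\emph{Main obstacle.} Neither step is hard. The only point needing care is the combinatorial bookkeeping in the Chen relation: matching the normalization of $\mathrm{Sym}$ against the factorials $1/(i!\,j!)$, which the binomial identity handles. The norm admissibility assumption should also be stated explicitly.
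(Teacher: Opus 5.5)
Your proposal is correct and follows the same route as the paper: Lemma~\ref{lem:1} supplies a jump control $c_0$, the bound $\|\x^k_{s,t}\|^{p/k}\le (k!)^{-p/k}\|X_{s,t}\|^p\le c_0(s,t)$ gives condition (i), and the Chen relation is the binomial identity behind \cite[Lemma 4.7]{CP19}. Your version is in fact slightly more complete in two places: you write out the symmetric binomial argument that the paper only cites, and you verify condition (i) exactly as stated by summing over $k$ to obtain the single control $\lfloor p\rfloor c_0$, whereas the paper stops at finite $p/k$-variation of each level separately.
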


\begin{proof}
According to Definition~\ref{def:1}, our first step is to prove that $\x^k$ has finite $p/k$-variation. It is known that $X$ has finite $p$-variation, i.e. $\|X\|_{p\text{-var}}<\infty$. By Lemma~\ref{lem:1}, there exists a control function $c(s, t)$ such that
\[\|X_{s, t}\|^p\le c(s, t), \quad \forall 0\le s\le t\le T. \]
Then
\begin{equation}
\|X_{s, t}\|\le c(s, t)^{1/p}.
\mlabel{eq:1}
\end{equation}
Consider the norm of the component $\x^k_{s,t}$ of the $k$-th layer. By the definition of $\x^k_{s,t}$, we have
\begin{equation}
\|\mathbb{X}^k_{s,t}\| = \left\| \frac{1}{k!} X_{s,t}^{\otimes k} \right\| = \frac{1}{k!} \|X_{s,t}\|^k\overset{(\ref{eq:1})}{\le}\frac{1}{k!} \left( c(s, t)^{1/p} \right)^k = \frac{1}{k!} c(s, t)^{k/p}.
\mlabel{eq:2}
\end{equation}
Raising both sides of~(\ref{eq:2}) to the power $p/k$ gives
$$\|\mathbb{X}^k_{s,t}\|^{p/k} \le \left( \frac{1}{k!} \right)^{p/k} c(s, t).$$
For any finite partition $\Pi = \{0 = t_0 < t_1 < \dots < t_n = T\}$ of the interval $[0, T]$, sum the $p/k$-th powers of its increments:
\begin{align}
\sum_{i=0}^{n-1} \|\mathbb{X}^k_{t_i, t_{i+1}}\|^{p/k}\le&\  \left( \frac{1}{k!} \right)^{p/k} \sum_{i=0}^{n-1} c(t_i, t_{i+1})\nonumber \\
\le&\ \left( \frac{1}{k!} \right)^{p/k}c(0, T) \hspace{2cm}(\text{by Definition~\ref{def:2}}). \mlabel{eq:3}
\end{align}
Since this inequality holds for any partition of $[0, T]$, and based on the definition of $p$-variation, we obtain
\begin{align*}
\|\mathbb{X}^k\|_{p/k\text{-var}; [0, T]}=&\ \Big(\sup_{\Pi } \sum_{[t_i, t_{i+1}] \in \Pi} \|\mathbb{X}^k_{t_i, t_{i+1}}\|^{p/k}\Big)^{k/p} \\
\le&\ \frac{1}{k!}c(0, T)^{k/p} \hspace{2cm}(\text{by~(\ref{eq:3})})\\
\le&\ \max\{1, c(0, T)\}\hspace{2cm}(\text{by $\frac{1}{k!}\le 1$ and $k/p<1$})\\
<&\ \infty.
\end{align*}

The proof of the reduced Chen relation proceeds along the lines of Lemma 4.7 in~\cite{CP19}.
\end{proof}

We next introduce reduced controlled rough paths with jumps. We keep the definition from~\cite{CP19}, except that we do not require the reduced controlled rough path $\Y$ to be continuous.
\begin{definition}
Let $p \ge 1$ and $\mathbb{X}$ be a reduced rough path of finite $p$-variation. A path
\[
\Y = (Y^0, Y^1, \dots, Y^{\lfloor p \rfloor-1}):[0, T]\to \Big(W, \mathcal{L}\big(\text{Sym}_1(V), W\big),\ldots, \mathcal{L}\big(\text{Sym}_{\lfloor p \rfloor-1}(V), W\big)   \Big)
\]
is a reduced $\X$-controlled rough path if
\[\|R^i\|_{\frac{p}{\lfloor p \rfloor - i }-var}<\infty,\quad \text{for\ } i=0, \dots, \lfloor p \rfloor-1,  \]
where
\begin{equation*}
R^i_{s, t}:=
\begin{cases}
 Y_t^i-Y_s^i-\sum_{j=1}^{\lfloor p \rfloor-1-i}Y^{i+j}_s\x^j_{s, t},\quad i=0, \ldots, \lfloor p \rfloor-2;\\
Y_t^{\lfloor p \rfloor-1}-Y_s^{\lfloor p \rfloor-1},\hspace{2.42cm} i=\lfloor p \rfloor-1.
\end{cases}
\end{equation*}
In that case we write $\Y \in \mathcal{D}^{p}_{\mathbb{X}}([0,T], W)$.
\end{definition}

We end this subsection by recording a special reduced controlled rough path. For $k\in \ZZ_{\geq 1}$, denote by
\begin{align*}
\mathcal{C}^k_b(W, U):= \{ F:W \to U \text{ is } k \text{ times continuously differentiable
  and satisfies } \|D^jF\|_{\infty} < \infty, j=0, \ldots, k\}.
\end{align*}
Here $D^jF: W\to\mathcal L\big((W)^{\otimes j}, U\big)$ denotes the $j$-th differential of $F$.
\begin{lemma}
Let $p\ge 1$ and $\x$ be as in Proposition~\ref{prop:1}. Let $F\in \mathcal{C}^{\lfloor p \rfloor+1}_b(V, U)$, then
\[\Big(DF(X), D^2F(X), \dots, D^{\lfloor p \rfloor}F(X)\Big)\in \mathcal{D}^{p}_{\mathbb{X}}\Big([0,T], L(V, U)\Big). \]
\end{lemma}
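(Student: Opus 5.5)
We set $n:=\lfloor p\rfloor$ and take $Y^i:=D^{i+1}F(X)$ for $i=0,\dots,n-1$. Since $\x^j_{s,t}=\frac{1}{j!}X_{s,t}^{\otimes j}$ by Proposition~\ref{prop:1}, the remainders in the definition of $\mathcal{D}^p_{\x}$ are Taylor remainders. For $i\le n-2$,
\[
R^i_{s,t}=D^{i+1}F(X_t)-D^{i+1}F(X_s)-\sum_{j=1}^{n-1-i}\frac{1}{j!}D^{i+1+j}F(X_s)X_{s,t}^{\otimes j},
\]
and for $i=n-1$, $R^{n-1}_{s,t}=D^nF(X_t)-D^nF(X_s)$. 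The key point is that Taylor's theorem applies to the straight segment $X_s+\theta X_{s,t}$, $\theta\in[0,1]$, in the Banach space $V$. It therefore needs no continuity of $t\mapsto X_t$, and jumps of $X$ play no role in the pointwise estimate.

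First, fix $i\le n-2$ and apply Taylor's formula with integral remainder of order $m:=n-1-i$ to $G:=D^{i+1}F$. This $G$ is $C^{m+1}$ with bounded derivatives, because $i+1+m+1=n+1$ and $F\in\mathcal C^{n+1}_b$. We obtain
\[
R^i_{s,t}=\int_0^1\frac{(1-\theta)^{m}}{m!}\,D^{n+1}F(X_s+\theta X_{s,t})\,X_{s,t}^{\otimes (m+1)}\,d\theta ,
\]
so that $\|R^i_{s,t}\|\le \frac{1}{(m+1)!}\|D^{n+1}F\|_\infty\|X_{s,t}\|^{n-i}$. For $i=n-1$, the mean value inequality gives $\|R^{n-1}_{s,t}\|\le\|D^{n+1}F\|_\infty\|X_{s,t}\|$, which is the same bound with $n-i=1$. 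In all cases
\[
\|R^i_{s,t}\|\le C\,\|X_{s,t}\|^{n-i},\qquad C:=\|D^{n+1}F\|_\infty .
\]

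Second, we convert this into a variation bound. By Lemma~\ref{lem:1} there is a jump control $c$ with $\|X_{s,t}\|^p\le c(s,t)$. Then
\[
\|R^i_{s,t}\|^{p/(n-i)}\le C^{p/(n-i)}\,c(s,t).
\]
Summing over an arbitrary partition of $[0,T]$ and using the superadditivity in Definition~\ref{def:2} gives
\[
\|R^i\|_{\frac{p}{n-i}\text{-var}}\le C\,c(0,T)^{(n-i)/p}<\infty.
\]
This is the same argument as in the proof of Proposition~\ref{prop:1}. Together with the trivial fact that each $Y^i$ is bounded and takes values in the correct space, this shows that $(DF(X),\dots,D^nF(X))\in\mathcal D^p_{\x}([0,T],L(V,U))$. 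The required identification uses $\mathcal L(\mathrm{Sym}_i(V),\cdot)$ as the restriction of $i$-linear maps, and $D^{i+1}F$ is symmetric.

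The only delicate step is the bookkeeping in the Taylor expansion: checking that the indices match, so that the expansion of $D^{i+1}F$ to order $n-1-i$ leaves exactly an $(n-i)$-th order remainder. This needs $n+1$ bounded derivatives, which is precisely the $\mathcal C^{n+1}_b$ hypothesis. Otherwise the proof is identical to the continuous case in~\cite{CP19}.
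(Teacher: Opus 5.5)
Your proof is correct and follows the same route as the paper. In both, each $R^i$ is identified as a Taylor remainder of $D^{i+1}F$ along the segment from $X_s$ to $X_t$ and bounded by $\|D^{\lfloor p\rfloor+1}F\|_\infty\|X_{s,t}\|^{\lfloor p\rfloor-i}$ (mean value inequality at the top level), then converted to $p/(\lfloor p\rfloor-i)$-variation through the control $c$ of Lemma~\ref{lem:1}; your version makes explicit the summation over partitions that the paper leaves implicit after its pointwise bound $O(c(s,t)^{(\lfloor p\rfloor-i)/p})$.
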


\begin{proof}
For $i=0, \ldots, \lfloor p \rfloor-2$,
\begin{align*}
\|R^i_{s, t}\|=&\ \Big\| D^{i+1}F(X_t)-D^{i+1}F(X_s)-\sum_{j=1}^{\lfloor p \rfloor-1-i}D^{i+1+j}F(X_s)\x^j_{s, t}\Big\| \\
=&\ \Big\|D^{i+1}F(X_t)-D^{i+1}F(X_s)-\sum_{j=1}^{\lfloor p \rfloor-1-i} \frac{1}{j!}D^{i+1+j}F(X_s)X^{\otimes j}_{s, t}\Big\|\\
=&\ O\Big(c(s, t)^{(\lfloor p \rfloor-i)/p}\Big) \hspace{2cm}(\text{by Taylor's formula}).
\end{align*}

For $i=\lfloor p \rfloor-1$,
\begin{align*}
\Big\|R^{\lfloor p \rfloor-1}_{s, t}\Big\|=&\ \Big\|D^{\lfloor p \rfloor}F(X_t)-D^{\lfloor p \rfloor}F(X_s)\Big\| \\
\le&\ \|D^{\lfloor p \rfloor+1}F\|_{\infty}\|X_t-X_s\| \\
=&\ O\Big(c(s, t)^{1/p}\Big) \hspace{1cm}(\text{by Definition~\ref{def:1}~(i)}).
\end{align*}

Hence
\[\Big(DF(X), D^2F(X), \dots, D^{\lfloor p \rfloor}F(X)\Big)\in \mathcal{D}^{p}_{\mathbb{X}}\Big([0,T], L(V, U)\Big). \qedhere\]
\end{proof}

\vspace{0.4cm}

\subsection{Reduced rough integral with jumps}
The classical convergence criteria for integration no longer apply to jump paths. We review a convergence criterion for integration that is applicable to jump paths.

\begin{definition}~\cite{Hi38, Hi63}
For a partition $\Pi$ of $[0, T]$ and any $[u, v] \in \Pi, \Xi_{u,v}$ takes values in $\mathbb{R}^d$. 
We say that the Riemann sum $\sum_{[u,v]\in \Pi} \Xi_{u,v}$ converges to $K$ in the Refinement Riemann-Stieltjes (RRS) sense if for any $\varepsilon  > 0$, there exists $\Pi_\varepsilon$ such that for any refinement $\Pi \supset \Pi_\varepsilon$, one has $|\sum_{[u,v]\in \Pi} \Xi_{u,v} - K| < \varepsilon$.
\end{definition}

We now give the sewing lemma necessary for rough integration with jumps.

\begin{lemma}~\cite[Theorem 2.5]{FZ18}
Suppose $\Xi:\Delta_T \to V$. Define
\[\delta \Xi_{s,u,t}:=\Xi_{s, t}-\Xi_{s, u}-\Xi_{u, t}.\]
Assume $\delta \Xi$ satisfies
\[\delta \Xi_{s,u,t} =O\Big(c_{1}^{\alpha_{1}}(s,u)c_{2}^{\alpha_{2}}(u,t) \Big) ,\]
where $c_{1}, c_{2}$ are controls and $\alpha_{1} + \alpha_{2} > 1$. Then the following limit exists and is unique in the RRS sense:
\[ \text{RRS} - \lim_{\Pi} \sum_{[u,v]\in\Pi}\Xi_{u,v}, \]
where the limit is taken over refinements of partitions $\Pi$ of $[0, T]$.
\mlabel{lem:3}
\end{lemma}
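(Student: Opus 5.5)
The plan is a sewing argument carried out entirely through refinements, with no continuity of any control. The key object is the \emph{sewing map} $\mathcal{I}_{s,t}:=\text{RRS-}\lim_{\Pi}\sum_{[u,v]\in\Pi}\Xi_{u,v}$, taken over partitions $\Pi$ of $[s,t]$. The aim is to show that the Riemann sums $S(\Pi):=\sum_{[u,v]\in\Pi}\Xi_{u,v}$ form a Cauchy net with respect to refinement. Uniqueness is then automatic, since RRS limits of a net directed by refinement are unique: two candidate limits $K,K'$ with partitions $\Pi_\varepsilon,\Pi'_\varepsilon$ admit the common refinement $\Pi_\varepsilon\cup\Pi'_\varepsilon$, which gives $|K-K'|<2\varepsilon$. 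So everything reduces to the Cauchy property.

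\textbf{Step 1: a uniform maximal inequality.} First I would reduce to a single superadditive control. Set $c:=c_1+c_2$; it is again superadditive and vanishes on the diagonal. Since $c_i\le c$, we get $|\delta\Xi_{s,u,t}|\le C\,c(s,u)^{\alpha_1}c(u,t)^{\alpha_2}\le C\,c(s,t)^{\theta}$ with $\theta:=\alpha_1+\alpha_2>1$. Then I would prove the Young-type bound
\[
\Big|S(\Pi)-\Xi_{s,t}\Big|\le C'\,c(s,t)^{\theta}
\]
for every partition $\Pi$ of $[s,t]$, with $C'$ depending only on $C$ and $\theta$. To do this, remove points from $\Pi=\{s=t_0<\dots<t_N=t\}$ one at a time. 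By superadditivity, $\sum_i c(t_{i-1},t_{i+1})\le 2c(s,t)$, so some interior $t_i$ satisfies $c(t_{i-1},t_{i+1})\le \frac{2}{N-1}c(s,t)$. Deleting that point changes $S$ by $\delta\Xi_{t_{i-1},t_i,t_{i+1}}$, which is at most $C(\frac{2}{N-1})^\theta c(s,t)^\theta$. Summing over $N$ gives a convergent series $\sum_{N\ge 2} (N-1)^{-\theta}$. This step uses only superadditivity, never continuity, so jumps cause no trouble here.

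\textbf{Step 2: Cauchy property under refinement.} This is the main obstacle. For $\Pi'\supset\Pi$, Step 1 applied on each $[u,v]\in\Pi$ gives
\[
|S(\Pi')-S(\Pi)|\le C'\sum_{[u,v]\in\Pi}c(u,v)^\theta\le C'\max_{[u,v]\in\Pi}c(u,v)^{\theta-1}\,c(0,T).
\]
In the continuous case the maximum tends to $0$ with the mesh, but for a jump control it need not. For example, if $c(u,v)\ge \|X_{v}-X_{u}\|^p$ and there is a jump at $r$, then every interval straddling $r$ keeps a fixed size. The way around this is to exploit the fact that refinements may \emph{contain} the jump points. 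First, a superadditive control has only countably many "atoms" where $c(r-,r)$ or $c(r,r+)$ is positive. More precisely, for given $\eta>0$ there are only finitely many points $r$ with $\lim_{h\downarrow0}c(r-h,r+h)\ge\eta$, at most $c(0,T)/\eta$ of them. I would put all such points into $\Pi_\varepsilon$. Away from these points, I would use a compactness argument: every $x\in[0,T]$ has a neighbourhood on whose left and right half-open pieces $c<2\eta$ (using monotone limits $c(x,x+h)\downarrow c(x,x+)$, and similarly from the left). Hence a finite partition $\Pi_\varepsilon$ exists with $c(u,v)\le 2\eta$ on each of its intervals, possibly after adding the points $x$ themselves so that intervals are one-sided at each $x$. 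Superadditivity then makes this bound persist under any refinement. Choosing $\eta$ with $C'(2\eta)^{\theta-1}c(0,T)<\varepsilon/2$ makes $|S(\Pi')-S(\Pi'')|<\varepsilon$ for all $\Pi',\Pi''\supset\Pi_\varepsilon$, by comparing both with their common refinement.

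\textbf{Step 3: conclusion.} The net $S(\Pi)$ is Cauchy in the complete space, so the RRS limit exists, and it is unique by the argument at the start. As a by-product, $|\mathcal{I}_{s,t}-\Xi_{s,t}|\le C'c(s,t)^\theta$. The delicate point I expect to need care with is Step 2's one-sided limit argument: I must check that $c(x,x+h)\to c(x,x+)$ exists by monotonicity and that intervals of the form $[x,x+h]$ inherit smallness only after subtracting the one-sided jump. For this reason I would rather localize with points $x$ themselves in the partition and use the strict monotone limit $\lim_{h\downarrow 0}c(x+h',x+h)$ with $h'\downarrow 0$. Alternatively, and more cleanly, I could keep the two-factor structure $c_1(s,u)^{\alpha_1}c_2(u,t)^{\alpha_2}$ as in \cite{FZ18}, so that only intervals adjacent on one side need to be small.
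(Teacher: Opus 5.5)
The paper gives no proof of this lemma (it is quoted from Friz--Zhang), so your argument has to stand on its own. Your Step 1 (Young's point-removal bound, which indeed uses only superadditivity) and your uniqueness argument are fine. Step 2, however, has a genuine gap.

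\textbf{The gap.} You claim that some finite $\Pi_\varepsilon$ has $c(u,v)\le 2\eta$ on every interval. This fails as soon as $c$ has a one-sided atom. If $c(r-,r):=\lim_{u\uparrow r}c(u,r)>2\eta$, every partition contains an interval $[u,v]$ with $u<r\le v$, and then $c(u,v)\ge c(u,r)\ge c(r-,r)$. Putting $r$ into $\Pi_\varepsilon$ does not help, as your own example $c(u,v)\ge\|X_v-X_u\|^p$ shows.

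More fundamentally, replacing the hypothesis by $|\delta\Xi_{s,u,t}|\le C\,c(s,t)^{\theta}$ with $c=c_1+c_2$ throws away exactly the information that makes the lemma true. So no argument built only on Step 1 can close. Here is a counterexample to the weakened statement:
\begin{itemize}
\item take $r\in(0,T)$, $g=\mathbf 1_{[r,T]}$, and the additive control $c(s,t)=|g(t)-g(s)|$;
\item let $h(s)=\sin(1/(r-s))$ for $s<r$ and $h=0$ on $[r,T]$;
\item set $\Xi_{s,t}=h(s)(g(t)-g(s))$.
\end{itemize}
Then $\delta\Xi_{s,u,t}=-(h(u)-h(s))(g(t)-g(u))$ vanishes unless $s\le u<r\le t$, in which case $c(s,t)=1$. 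Hence $|\delta\Xi_{s,u,t}|\le 2c(s,t)^{\theta}$ for every $\theta>1$. Yet $S(\Pi)=h(u_\Pi)$ with $u_\Pi=\max\{t_i\in\Pi:\ t_i<r\}$. Every partition has refinements with $S=1$ and others with $S=-1$, so there is no RRS limit. This $\Xi$ violates the two-factor hypothesis, which would force $h$ to have finite $1/\alpha_1$-variation on $[0,r)$, hence a left limit at $r$.

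\textbf{The fix.} The missing idea is the one you mention only in your last sentence: keep $c_1$ and $c_2$ separate, and ask for smallness on half-open intervals only.

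First, for any control $c$ one has $c(x+,y):=\lim_{s\downarrow x}c(s,y)\to0$ as $y\downarrow x$, and $c(y,x-):=\lim_{t\uparrow x}c(y,t)\to0$ as $y\uparrow x$. Otherwise superadditivity would produce infinitely many disjoint intervals with $c\ge\delta$. A compactness or supremum argument then gives a finite $\Pi_\varepsilon$ each of whose intervals $[u,v]$ satisfies $c_2(u+,v)<\eta$ or $c_1(u,v-)<\eta$.

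Now take $\Pi'\supset\Pi_\varepsilon$ and run your Step 1 removal inside each $[u,v]\in\Pi_\varepsilon$, choosing the point to remove via $\omega:=c_1+c_2$. Every removed point $s_j$ lies in $(u,v)$. So in the first case $c_2(s_j,s_{j+1})\le c_2(u+,v)<\eta$, and in the second case $c_1(s_{j-1},s_j)\le c_1(u,v-)<\eta$.

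From $c_1(s_{j-1},s_j)^{\alpha_1}c_2(s_j,s_{j+1})^{\alpha_2}$, split off $\eta^{\lambda\alpha_2}$ (resp.\ $\eta^{\lambda\alpha_1}$), with $\lambda\in(0,1)$ small enough that the remaining exponent of $\omega(s_{j-1},s_{j+1})$ still exceeds $1$. This gives $|S(\Pi'\cap[u,v])-\Xi_{u,v}|\le C'\eta^{\kappa}\omega(u,v)^{\theta'}$ with $\kappa>0$ and $\theta'>1$. This step uses $\alpha_1,\alpha_2>0$, which is implicit in the statement.

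Summing over $[u,v]\in\Pi_\varepsilon$ gives $|S(\Pi')-S(\Pi_\varepsilon)|\le C'\eta^{\kappa}\omega(0,T)^{\theta'}$, uniformly in $\Pi'\supset\Pi_\varepsilon$. That is the Cauchy property you need. The two-factor form is essential precisely because each $\delta\Xi_{s_{j-1},s_j,s_{j+1}}$ then contains one factor that never sees the atom at the bad endpoint of $[u,v]$.
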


\begin{proposition}
Let $p \ge 1$, let $\x = (1, \x^1, \dots, \x^{\lfloor p \rfloor})$ be a reduced rough path of finite $p$-variation and let $\Y= (Y^0, Y^1, \dots, Y^{\lfloor p \rfloor-1}) \in \mathcal{D}^{p}_{\mathbb{X}}\Big([0,T], L(V, U)\Big)$. Then the reduced rough integral
\[\int_0^tY^0_rd\,\X_r:=\text{RRS} - \lim_{\Pi} \sum_{[t_j, t_{j+1}] \in \Pi} \sum_{k=0}^{\lfloor p \rfloor-1} Y^k_{t_j}\x^{k+1}_{t_j, t_{j+1}} , \quad t \in [0,T]  \]
is well defined, where the limit is taken over refinements of partitions $\Pi$ of $[0, T]$.
\mlabel{prop:2}
\end{proposition}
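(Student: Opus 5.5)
We apply the sewing lemma with jumps, Lemma~\ref{lem:3}, to the local approximation
\[
\Xi_{s,t}:=\sum_{k=0}^{\lfloor p \rfloor-1} Y^k_{s}\,\x^{k+1}_{s,t},\qquad (s,t)\in\Delta_T .
\]
The main task is to compute $\delta\Xi_{s,u,t}$ explicitly, using the reduced Chen relation of Definition~\ref{def:1}(ii) and the remainders $R^i$ of the controlled path. This should produce a bound of the form $O\big(c_1^{\alpha_1}(s,u)c_2^{\alpha_2}(u,t)\big)$ with $\alpha_1+\alpha_2>1$. Once this is done, the RRS limit over refinements of partitions exists and is unique, and the integral is well defined. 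For $t<T$ we run the same argument on $[0,t]$.

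\textbf{Step 1: the Chen expansion.} Write $n=\lfloor p\rfloor$. On the symmetric level-$(k+1)$ component, the reduced Chen relation reads
\[
\x^{k+1}_{s,t}=\sum_{j=0}^{k+1}\sym\big(\x^{j}_{s,u}\otimes\x^{k+1-j}_{u,t}\big).
\]
Each $Y^k_s$ acts on $\sym_{k+1}(V)$, viewed as $\mathcal L(\sym_{k+1}(V),U)$ through the identification $L(V,U)$-valued $Y^k\in\mathcal L(\sym_k(V),L(V,U))$. Therefore $Y^k_s$ applied to a symmetric product equals $Y^k_s$ applied to the plain tensor product, and this is the point where the symmetry of $Y^k$ is used.

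\textbf{Step 2: regrouping the defect.} Substituting gives
\[
\delta\Xi_{s,u,t}=\sum_{k}Y^k_s\x^{k+1}_{s,t}-\sum_kY^k_s\x^{k+1}_{s,u}-\sum_kY^k_u\x^{k+1}_{u,t}.
\]
The $j=k+1$ terms cancel the middle sum. We then collect the remaining terms according to the level $m=k+1-j$ of $\x_{u,t}$. This yields
\[
\delta\Xi_{s,u,t}=-\sum_{m=1}^{n}\Big(Y^{m-1}_u-\sum_{j=0}^{n-m}Y^{m-1+j}_s\x^j_{s,u}\Big)\x^{m}_{u,t}=-\sum_{m=1}^{n}R^{m-1}_{s,u}\,\x^m_{u,t}.
\]
Here we must check that the indices match the definition of $R^i$ exactly, including the top case $i=n-1$, where $R^{n-1}_{s,u}=Y^{n-1}_u-Y^{n-1}_s$.

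\textbf{Step 3: the bound.} Let $c_R$ be a control dominating $\sum_i\|R^i\|^{p/(n-i)}$; it exists by Lemma~\ref{lem:1}, summing the finitely many controls. Let $c$ be the control of Definition~\ref{def:1}(i). Then
\[
\|R^{m-1}_{s,u}\x^m_{u,t}\|\le c_R(s,u)^{(n-m+1)/p}\,c(u,t)^{m/p},
\]
and the exponents sum to $(n+1)/p>1$, since $n+1>p$. Because Lemma~\ref{lem:3} is stated for a single product, we either apply it to each of the finitely many terms after a linearity argument, or replace both controls by $\bar c=c_R+c$ and use a common pair of exponents. The cleanest route is to note that $\bar c(s,u)^{a_m}\bar c(u,t)^{b_m}$ with $a_m+b_m=\theta:=(n+1)/p$ fixed is what the sewing argument in \cite{FZ18} really needs.

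\textbf{Main obstacle.} I expect the bookkeeping in Step 2 to be the hardest part: verifying that the symmetrization in the reduced Chen relation is absorbed by $Y^k$, and that the leftover terms assemble exactly into the controlled remainders. Lemma~\ref{lem:3} should not simply be quoted with different exponent pairs per term. Instead, I would rely on the maximal-inequality proof, which removes a point of the partition with a small combined control, and that proof works equally well for finite sums of such terms.
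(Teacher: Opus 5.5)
Your proposal is correct and follows the paper's proof. You use the same germ $\Xi_{s,t}=\sum_{k}Y^k_s\x^{k+1}_{s,t}$, the same reduced Chen expansion (invoking the symmetry of $Y^k_s$, as the paper does) to obtain $\delta\Xi_{s,u,t}=-\sum_{k}R^k_{s,u}\x^{k+1}_{u,t}$, and the same exponent count $(\lfloor p\rfloor+1)/p>1$ fed into Lemma~\ref{lem:3}. Your observation that Lemma~\ref{lem:3} is stated for a single product, so that the maximal-inequality argument has to be rerun for a finite sum of terms with different exponent pairs, is a legitimate refinement of a step the paper passes over without comment.
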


\begin{proof}
Let $\Xi_{s, t}:=\sum_{k=0}^{\lfloor p \rfloor-1}Y^k_s\mathbb{X}^{k+1}_{s,t}$. We need to show that
\[\delta \Xi_{s,u,t}=\sum_{k=0}^{\lfloor p \rfloor-1}Y^k_s\mathbb{X}^{k+1}_{s,t}-\sum_{k=0}^{\lfloor p \rfloor-1}Y^k_s\mathbb{X}^{k+1}_{s,u}-\sum_{k=0}^{\lfloor p \rfloor-1}Y^k_u\mathbb{X}^{k+1}_{u,t} \]
satisfies the conditions specified in Lemma~\ref{lem:3}. Since $\Y \in \mathcal{D}_{\mathbb{X}}^{p}$ is a reduced controlled path, we expand $Y_u^k$ at $s$:
\begin{align*}
 \sum_{k=0}^{\lfloor p \rfloor-1} Y^k_u \mathbb{X}^{k+1}_{u,t} 
&= \sum_{k=0}^{\lfloor p \rfloor-1} \Bigg( \sum_{j=k}^{\lfloor p \rfloor-1} Y^j_s\mathbb{X}^{j-k}_{s,u} + R^k_{s,u} \Bigg) \otimes \mathbb{X}^{k+1}_{u,t} \\
&= \sum_{j=0}^{\lfloor p \rfloor-1} Y^j_s \left( \sum_{k=0}^j \mathbb{X}^{j-k}_{s,u} \otimes \mathbb{X}^{k+1}_{u,t} \right) + \sum_{k=0}^{\lfloor p \rfloor-1} R^k_{s,u} \otimes \mathbb{X}^{k+1}_{u,t} \\
&= \sum_{k=0}^{\lfloor p \rfloor-1} Y^k_s \left( \mathbb{X}^{k+1}_{s,t} - \mathbb{X}^{k+1}_{s,u} \right) + \sum_{k=0}^{\lfloor p \rfloor-1} R^k_{s,u} \otimes \mathbb{X}^{k+1}_{u,t},
\end{align*}
where in the second step we reordered the double sum, and in the last step we used the reduced Chen relation for the $(k+1)$-th level: $\mathbb{X}^{k+1}_{s,t} = \text{Sym} \sum_{j=0}^{k+1} \mathbb{X}^{k+1-j}_{s,u} \otimes \mathbb{X}^j_{u,t}$, noting that $Y^k_s$ is symmetric. Therefore,
\begin{align*}
\delta \Xi_{s,u,t} 
&= \sum_{k=0}^{\lfloor p \rfloor-1} Y^k_s \mathbb{X}^{k+1}_{s,t} - \sum_{k=0}^{\lfloor p \rfloor-1} Y^k_s \mathbb{X}^{k+1}_{s,u} - \left( \sum_{k=0}^{\lfloor p \rfloor-1} Y^k_s (\mathbb{X}^{k+1}_{s,t} - \mathbb{X}^{k+1}_{s,u}) + \sum_{k=0}^{\lfloor p \rfloor-1} R^k_{s,u} \otimes \mathbb{X}^{k+1}_{u,t} \right) \\
&= - \sum_{k=0}^{\lfloor p \rfloor-1} R^k_{s,u} \otimes \mathbb{X}^{k+1}_{u,t}.
\end{align*}
According to Definition~\ref{def:1} and Definition~\ref{def:2}, we have $\|R^k_{s,u}\| = O(c(s,u)^{\frac{\lfloor p \rfloor - k}{p}})$ and $\|\mathbb{X}^{k+1}_{u,t}\| = O(c(u,t)^{\frac{k+1}{p}})$. Thus,
\[ \|\delta \Xi_{s,u,t}\| = \sum_{k=0}^{\lfloor p \rfloor-1} O(c(s,u)^{\frac{\lfloor p \rfloor - k}{p}} c(u,t)^{\frac{k+1}{p}}). \]
Since the sum of exponents $\frac{\lfloor p \rfloor - k}{p} + \frac{k+1}{p} = \frac{\lfloor p \rfloor + 1}{p} > 1$ for all $k$, the result follows from Lemma~\ref{lem:3}.
\end{proof}

\section{The jump It\^o-type formula}\label{sec:3}
In this section, we present the It\^o-type formula for jump paths with arbitrary regularity. Before proving the main formula, we recall the following lemma as a preparation.


\begin{lemma}~\cite[Lemma 1.6]{FZ18}
Let $p \ge 1$. Let $X: [0, T]\to V$ and $\|X\|_{p\text{-var}; [0, T]}<\infty$. Then for any $\varepsilon > 0$, there exists a partition $\Pi$ such that for any interval $[s, t] \in \Pi$,
$\sup_{u, v \in (s, t)} \|X_{u, v}\| < \varepsilon.$
\mlabel{lem:4}
\end{lemma}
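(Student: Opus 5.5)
The plan is a greedy construction of the partition, combined with a counting argument that uses finite $p$-variation to show the construction stops after finitely many steps. To get a strict inequality at the end, I will aim for oscillation at most $\varepsilon/2$ on each open interval. Set
\[
\operatorname{osc}(a,b):=\sup_{u,v\in(a,b)}\|X_{u,v}\|.
\]

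\emph{Step 1: one-sided limits.} First I will show that $X$ has a right limit at every $t\in[0,T)$ (and a left limit at every $t\in(0,T]$). Suppose the right limit at $t$ failed to exist. By the Cauchy criterion there would be $\delta>0$ and points $t<\dots<v_2<u_2<v_1<u_1$ decreasing to $t$ with $\|X_{v_i,u_i}\|\ge\delta$. These intervals are disjoint, so their $p$-th powers sum to infinity, contradicting $\|X\|_{p\text{-var};[0,T]}<\infty$. The same Cauchy property gives, for every $t<T$, some $\eta>0$ with $\operatorname{osc}(t,t+\eta)\le\varepsilon/2$.

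\emph{Step 2: greedy partition.} Put $t_0=0$ and, as long as $t_k<T$,
\[
t_{k+1}:=\sup\{t\in(t_k,T]:\operatorname{osc}(t_k,t)\le\varepsilon/2\}.
\]
By Step 1 we have $t_{k+1}>t_k$. Moreover $\operatorname{osc}(t_k,t_{k+1})\le\varepsilon/2<\varepsilon$. Indeed, any $u,v\in(t_k,t_{k+1})$ lie in some $(t_k,t)$ with $t<t_{k+1}$ and $\operatorname{osc}(t_k,t)\le\varepsilon/2$. So if the procedure reaches $T$ after finitely many steps, the points $t_0<t_1<\dots<t_N=T$ form the required partition.

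\emph{Step 3: termination (the main point).} Suppose instead that $t_{k+1}<T$ for all $k$ up to some large $M$. By maximality, for each such $k$ and each small $\eta>0$ we have $\operatorname{osc}(t_k,t_{k+1}+\eta)>\varepsilon/2$. Choosing $\eta$ with $t_{k+1}+\eta<t_{k+2}$, we obtain $u_k<v_k$ in $(t_k,t_{k+2})$ with $\|X_{u_k,v_k}\|>\varepsilon/2$.

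Taking only even $k=2m$ makes the intervals $(t_{2m},t_{2m+2})$ pairwise disjoint. Hence
\[
\lfloor M/2\rfloor\,(\varepsilon/2)^p\le\|X\|^p_{p\text{-var};[0,T]}.
\]
This bounds $M$, so the procedure must stop at $T$ after finitely many steps.

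The delicate point is exactly this counting step: the maximality of $t_{k+1}$ only produces a bad pair slightly beyond $t_{k+1}$. That is why I pair every other interval, so that the resulting increments sit in disjoint intervals and the $p$-variation bound applies.
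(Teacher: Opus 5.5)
Your proof is correct. The paper gives no proof of this lemma; it simply imports it from \cite{FZ18}. So there is no internal argument to compare with, and your greedy construction is a complete, self-contained proof.

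The two points that need care both hold. First, $\operatorname{osc}(t_k,t_{k+1})\le\varepsilon/2$, because by the definition of the supremum any $u,v\in(t_k,t_{k+1})$ lie below some admissible $t$. Second, in the termination step the bad pair found just beyond $t_{k+1}$ lies inside $(t_k,t_{k+2})$. Hence the even-indexed pairs span pairwise disjoint intervals, and these can be completed to a partition of $[0,T]$.

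Two minor remarks. Step~1 does not actually need the existence of one-sided limits, nor completeness of $V$. The same disjoint-interval argument shows directly that $\operatorname{osc}(t,t+\eta)\le\varepsilon/2$ for small $\eta>0$, and that is all Step~2 uses. Also, the counting in Step~3 is exactly the superadditivity of the control $c(s,t)=\|X\|^p_{p\text{-var};[s,t]}$ from Lemma~\ref{lem:1}: each even-indexed interval $[t_{2m},t_{2m+2}]$ carries $c$-mass larger than $(\varepsilon/2)^p$. Read this way, your argument gives an explicit bound of order $(2/\varepsilon)^p\|X\|^p_{p\text{-var};[0,T]}$ on the number of intervals in $\Pi$. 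It also never uses $p\ge 1$.
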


The following notations will be employed in Theorem~\ref{thm:main}:

\begin{align*}
X_{t-}:=&\ \lim_{u\uparrow t}X_u,\quad X_{t+}:=\ \lim_{u\downarrow t}X_u, \quad \Delta_t^- X:= X_{t-, t}=\lim_{u\uparrow t}X_{u, t}, \\
 \quad \Delta_t^+ X:=&\ \ X_{t, t+}=\lim_{u\downarrow t}X_{t, u}, \quad X_{s+, t-}:= \lim_{v\downarrow s,\ u\uparrow t}X_{v, u}.
\end{align*}
We now state the jump It\^o-type formula.

\begin{theorem}
Let $p \ge 1$, and let $X:  [0, T]\to V$ and $\|X\|_{p\text{-var}; [0, T]}<\infty$. Suppose $F\in\mathcal{C}^{\lfloor p \rfloor+1}_b(V, U)$.
Then  
\begin{enumerate}

\item 
\begin{equation}
\begin{aligned}
F(X_T) - F(X_0) =&\ \int_0^T D F(X_t) d\,\X_t+\sum_{0<t\le T}\Big( F(X_t) - F(X_{t-}) - \sum_{k=1}^{\lfloor p \rfloor} \frac{1}{k!} D^k F(X_{t-})(\Delta_t^- X)^{\otimes k}\Big)  \\
&\ +\sum_{0\le t< T}\Big(F(X_{t+}) - F(X_t) - \sum_{k=1}^{\lfloor p \rfloor} \frac{1}{k!} D^k F(X_{t})(\Delta_t^+ X)^{\otimes k}\Big),
\end{aligned}
\mlabel{eq:maina}
\end{equation}
where
\[\int_0^T D F(X_t) d\,\X_t:=\text{RRS} - \lim_{\Pi} \sum_{[t_i, t_{i+1}] \in \Pi}\sum_{k=1}^{\left \lfloor p \right \rfloor }\frac{1}{k!}D^kF(X_{t_{i}})X_{t_{i}, t_{i+1}}^{\otimes k}.\]\mlabel{it:bitem1}

\item If X is a c\`adl\`ag path (right-continuous), then~\meqref{eq:maina} reduces to
\begin{align*}
F(X_T) - F(X_0) =&\ \int_0^T D F(X_t) d\,\X_t+\sum_{0<t\le T}\Big( F(X_t) - F(X_{t-}) - \sum_{k=1}^{\lfloor p \rfloor} \frac{1}{k!} D^k F(X_{t-})(\Delta_t^- X)^{\otimes k}\Big).
\end{align*}\mlabel{it:bitem2}

\item If X is a continuous path, then~\meqref{eq:maina} reduces to
\begin{equation*}
F(X_T) - F(X_0) =\int_0^T D F(X_t) d\,\X_t.
\end{equation*}
This is the continuous pathwise change-of-variable formula given in~\cite{CP19}.\mlabel{it:bitem3}
\end{enumerate}
\mlabel{thm:main}
\end{theorem}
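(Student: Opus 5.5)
The plan is to compare the telescoping identity with the Riemann sums. For any partition $\Pi$ of $[0,T]$,
\[
F(X_T)-F(X_0)=\sum_{[s,t]\in\Pi}\big(F(X_t)-F(X_s)\big)
=\sum_{[s,t]\in\Pi}\Xi_{s,t}+\sum_{[s,t]\in\Pi}\rho_{s,t},
\]
where $\Xi_{s,t}:=\sum_{k=1}^{\lfloor p\rfloor}\frac{1}{k!}D^kF(X_s)X_{s,t}^{\otimes k}$ and $\rho_{s,t}:=F(X_t)-F(X_s)-\Xi_{s,t}$ is the Taylor remainder. By Proposition~\ref{prop:2}, applied with the controlled path of the lemma preceding it, $\sum_\Pi\Xi$ converges in the RRS sense to $\int_0^TDF(X_t)\,d\X_t$. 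Hence $\sum_\Pi\rho_{s,t}$ also converges in the RRS sense, and it remains to identify its limit with the two jump sums. First, both jump series converge absolutely. Taylor's formula with $F\in\mathcal C^{\lfloor p\rfloor+1}_b$ gives the bound $\|\rho_{s,t}\|\le C\|X_{s,t}\|^{\lfloor p\rfloor+1}$, so each jump term is $O(\|\Delta^\pm_tX\|^{\lfloor p\rfloor+1})$. Since $\lfloor p\rfloor+1>p$ and finite $p$-variation forces $\sum_t\|\Delta_t^\pm X\|^p<\infty$ over the countably many jump times, the series converge.

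Fix $\varepsilon>0$ and apply Lemma~\ref{lem:4} to obtain a partition $\Pi_\varepsilon$ whose open intervals carry oscillation $<\varepsilon$. Enlarge it by finitely many jump times $t_1,\dots,t_m$ chosen so that the jumps outside this set contribute less than $\varepsilon$ to both jump series. Now take any refinement $\Pi$ and split each $[s,t]\in\Pi$ as $s,\,s+,\,t-,\,t$. By continuity of $F$ and of its derivatives, one can refine further, inserting points $s<s'<t'<t$ close to the endpoints, and write
\[
\rho_{s,t}\approx\big(F(X_{s+})-F(X_s)-\Xi^{+}_s\big)+\rho_{s+,t-}+\big(F(X_t)-F(X_{t-})-\Xi^{-}_t\big)+\text{cross terms}.
\]
The cleaner route avoids these cross terms. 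Because the RRS limit of $\sum\rho$ exists, it can be evaluated along any cofinal sequence of refinements. I will take refinements that contain, for each point $t_i$, auxiliary points $t_i\pm\eta$ with $\eta\downarrow0$. On a short interval $[t_i,t_i+\eta]$, $\rho_{t_i,t_i+\eta}\to F(X_{t_i+})-F(X_{t_i})-\sum_k\frac1{k!}D^kF(X_{t_i})(\Delta^+_{t_i}X)^{\otimes k}$ as $\eta\to0$; the analogous statement holds on the left side. This yields exactly the jump correction terms at the chosen times.

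The main obstacle is showing that the remaining intervals $[u,v]$, whose interiors are free of big jumps, contribute $o(1)$. The bound
\[
\sum\|\rho_{u,v}\|\le C\sum\|X_{u,v}\|^{\lfloor p\rfloor+1}\le C\Big(\sup\|X_{u,v}\|\Big)^{\lfloor p\rfloor+1-p}\,\|X\|^p_{p\text{-var}}
\]
needs $\sup\|X_{u,v}\|$ to be small. Lemma~\ref{lem:4} controls only the oscillation over open intervals, and an endpoint jump can remain in $X_{u,v}$. I would handle this in two steps. At endpoints in $\{t_i\}$ the jump is isolated by the auxiliary $\eta$-points. At all other endpoints every jump is smaller than $\varepsilon$, because the threshold $\varepsilon$ is chosen so that only finitely many jumps, namely the $t_i$, exceed it. Then $\|X_{u,v}\|\le\|\Delta^+_uX\|+\varepsilon+\|\Delta^-_vX\|\le3\varepsilon$, and the sum above is $O(\varepsilon^{\lfloor p\rfloor+1-p})$. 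The small jumps that are discarded are already counted in the $\varepsilon$-tail of the absolutely convergent jump series. Letting $\varepsilon\to0$ proves (\ref{it:bitem1}).

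Parts (\ref{it:bitem2}) and (\ref{it:bitem3}) follow at once. For a c\`adl\`ag path $\Delta^+_tX=0$, so every right-jump term vanishes. For a continuous path both $\Delta^\pm_tX=0$, so all jump terms vanish and the formula reduces to the one in \cite{CP19}.
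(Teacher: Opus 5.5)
Your proof is correct, but it is organised differently from the paper's.

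\textbf{The paper's route.} It writes an exact identity on each partition, $F(X_T)-F(X_0)=A_1+A_2+A_3+B_1+B_2+B_3$, by splitting every interval $[t_i,t_{i+1}]$ at $t_i+$ and $t_{i+1}-$. Then $A_2+A_3$ is exactly the jump correction at every partition point, and $B_1$ is the Taylor remainder along the interior increments $X_{t_i+,t_{i+1}-}$. The terms $B_2,B_3$ are cross terms from re-centring the Taylor polynomials from $X_{t_i}$ to $X_{t_i+}$ and $X_{t_{i+1}-}$. To tame $B_3$, the paper only uses partitions on which $X$ is continuous at one endpoint of each interval. It then kills $B_2,B_3$ by splitting the jumps at a size threshold.

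\textbf{Your route.} You never separate jumps from interior increments except at the finitely many large jump times, which you isolate in shrinking intervals $[t_i-\eta,t_i]$ and $[t_i,t_i+\eta]$. Continuity of $F$ and $D^kF$ then gives the exact correction terms as $\eta\to0$. On every other interval $\|X_{u,v}\|\le 3\varepsilon$, so the single estimate $C(3\varepsilon)^{\lfloor p\rfloor+1-p}\|X\|^p_{p\text{-var}}$ absorbs both the interior remainders and all small jumps at once. The small jumps are then recovered from the tail of the absolutely convergent jump series.

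\textbf{Comparison.} Your argument is much shorter, with no cross-term algebra and no endpoint-continuity device. The cost is that you lean more visibly on the a priori existence of the RRS limit of $\sum\Xi$ from Proposition~\ref{prop:2}. You compute $\sum\rho$ only on special refinements and pass to an extra limit in $\eta$. The paper also evaluates only along a special class of partitions, so it uses Proposition~\ref{prop:2} in the same way. Its decomposition, however, gives explicit error terms on each admissible partition and tracks the jumps at all partition points.

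\textbf{Two small fixes.} First, finite partitions of $[0,T]$ admit no cofinal sequence, since a countable union of finite sets cannot contain every point. So replace ``any cofinal sequence'' with the following. Given $\delta>0$, let $\Pi_\delta$ come from the RRS definition for the limit $L$ of $\sum\rho$. The partitions $\Pi_\delta\cup\Pi_\varepsilon\cup\{t_i\}\cup\{t_i\pm\eta\}$, with $\eta$ small, refine $\Pi_\delta$, so $|\sum\rho-L|<\delta$ on them. Then let $\eta\to0$, then $\delta\to0$, then $\varepsilon\to0$.

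Second, state explicitly that $\{t_i\}$ contains every $t$ with $\|\Delta^\pm_tX\|>\varepsilon$ (finitely many). It must also contain enough further jump times to make the tail of both jump series smaller than $\varepsilon$. You use both properties, but you state them in different places.
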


\vspace{0.5cm}

\begin{proof}
It suffices to prove~\eqref{it:bitem1}, as~\eqref{it:bitem2} and~\eqref{it:bitem3} can be directly derived from~\eqref{it:bitem1}.
Since $X$ has finite $p$-variation on $[0,T]$, it is a regulated path, ensuring that the left and right limits $X_{t-}$ and $X_{t+}$ exist for all $t \in [0,T]$.
For any partition $\Pi=\{0=t_0<t_1<\cdots<t_N=T\}$ of $[0, T]$, we have
\begin{align*}
F(X_T) - F(X_0)=&\ \sum_{i=0}^{N-1}\bigg(F(X_{t_{i+1}}) - F(X_{t_{i}})-\sum_{k=1}^{\left \lfloor p \right \rfloor }\frac{1}{k!}D^kF(X_{t_{i}})X_{t_{i}, t_{i+1}}^{\otimes k} \bigg)+ \sum_{i=0}^{N-1}\sum_{k=1}^{\left \lfloor p \right \rfloor }\frac{1}{k!}D^kF(X_{t_{i}})X_{t_{i}, t_{i+1}}^{\otimes k} \\
=&\ \sum_{i=0}^{N-1}\Big(\left(F(X_{t_{i+1}})-F(X_{t_{i+1-}}) \right)+\left(F(X_{t_{i+}})-F(X_{t_{i}}) \right)\Big)\\
&+\sum_{i=0}^{N-1}\bigg(F(X_{t_{i+1}-}) - F(X_{t_{i}+})-\sum_{k=1}^{\left \lfloor p \right \rfloor }\frac{1}{k!}D^kF(X_{t_{i}})X_{t_{i}, t_{i+1}}^{\otimes k} \bigg)+ \sum_{i=0}^{N-1}\sum_{k=1}^{\left \lfloor p \right \rfloor }\frac{1}{k!}D^kF(X_{t_{i}})X_{t_{i}, t_{i+1}}^{\otimes k}.
\end{align*}
Denote
\begin{align*}
A_1:=&\ \sum_{i=0}^{N-1}\sum_{k=1}^{\left \lfloor p \right \rfloor }\frac{1}{k!}D^kF(X_{t_{i}})X_{t_{i}, t_{i+1}}^{\otimes k}; \\
A_2:=&\ \sum_{i=0}^{N-1}\Big(\left(F(X_{t_{i+1}})-F(X_{t_{i+1-}}) \right)+\left(F(X_{t_{i+}})-F(X_{t_{i}}) \right)\Big).
\end{align*}
Then
\begin{align*}
F(X_T) - F(X_0)=&\ A_2+\sum_{i=0}^{N-1}\bigg(F(X_{t_{i+1}-}) - F(X_{t_{i}+})-\sum_{k=1}^{\left \lfloor p \right \rfloor }\frac{1}{k!}D^kF(X_{t_{i}})X_{t_{i}, t_{i+1}}^{\otimes k} \bigg)+A_1 \\
=&\ A_2+\sum_{i=0}^{N-1}\bigg(F(X_{t_{i+1}-}) - F(X_{t_{i}+})-\sum_{k=1}^{\left \lfloor p \right \rfloor }\frac{1}{k!}D^kF(X_{t_{i}+})X_{t_{i}+, t_{i+1}-}^{\otimes k} \bigg)\\
&+\sum_{i=0}^{N-1}\sum_{k=1}^{\left \lfloor p \right \rfloor }\bigg(\frac{1}{k!}D^kF(X_{t_{i}+})X_{t_{i}+, t_{i+1}-}^{\otimes k}-\frac{1}{k!}D^kF(X_{t_{i}})X_{t_{i}, t_{i+1}}^{\otimes k}\bigg)+A_1.
\end{align*}
Let
\begin{align*}
B_1:=&\ \sum_{i=0}^{N-1}\bigg(F(X_{t_{i+1}-}) - F(X_{t_{i}+})-\sum_{k=1}^{\left \lfloor p \right \rfloor }\frac{1}{k!}D^kF(X_{t_{i}+})X_{t_{i}+, t_{i+1}-}^{\otimes k} \bigg).
\end{align*}
We have
\begin{align*}
&\ F(X_T) - F(X_0) \\
=&\ A_2+B_1+\sum_{i=0}^{N-1}\sum_{k=1}^{\left \lfloor p \right \rfloor }\bigg(\frac{1}{k!}D^kF(X_{t_{i}+})X_{t_{i}+, t_{i+1}-}^{\otimes k}-\frac{1}{k!}D^kF(X_{t_{i}})X_{t_{i}, t_{i+1}}^{\otimes k}\bigg)+A_1 \\
=&\ A_2+B_1+\sum_{i=0}^{N-1}\bigg(DF(X_{t_{i}+})-DF(X_{t_{i}})-\sum_{k=2}^{\left \lfloor p \right \rfloor }\frac{1}{(k-1)!}D^kF(X_{t_{i}})(\Delta_{t_i}^+X)^{\otimes k-1} \bigg)X_{t_{i}+, t_{i+1}-}\\
&+\sum_{i=0}^{N-1}\bigg(DF(X_{t_{i}})X_{t_{i}+, t_{i+1}-}-DF(X_{t_{i}})X_{t_{i}, t_{i+1}}\bigg)\\
&+\sum_{i=0}^{N-1}\sum_{k=2}^{\left \lfloor p \right \rfloor }\bigg(\frac{1}{k!}D^kF(X_{t_{i}+})X_{t_{i}+, t_{i+1}-}^{\otimes k}+\frac{1}{(k-1)!}D^kF(X_{t_{i}})(\Delta_{t_i}^+X)^{\otimes k-1}X_{t_{i}+, t_{i+1}-}-\frac{1}{k!}D^kF(X_{t_{i}})X_{t_{i}, t_{i+1}}^{\otimes k}\bigg)+A_1.
\end{align*}
Set
\begin{align*}
B_2:=&\ \sum_{i=0}^{N-1}\bigg(DF(X_{t_{i}+})-DF(X_{t_{i}})-\sum_{k=2}^{\left \lfloor p \right \rfloor }\frac{1}{(k-1)!}D^kF(X_{t_{i}})(\Delta_{t_i}^+X)^{\otimes k-1} \bigg)X_{t_{i}+, t_{i+1}-}.
\end{align*}
We obtain
\begin{align*}
&\ F(X_T) - F(X_0)\\
=&\  A_2+B_1+B_2+\sum_{i=0}^{N-1}DF(X_{t_{i}})\bigg(-\Delta_{t_i}^+X-\Delta_{t_{i+1}}^-X\bigg)+\sum_{i=0}^{N-1}\sum_{k=2}^{\left \lfloor p \right \rfloor }\bigg(\frac{1}{k!}D^kF(X_{t_{i}+})X_{t_{i}+, t_{i+1}-}^{\otimes k}\\
&+\frac{1}{(k-1)!}D^kF(X_{t_{i}})(\Delta_{t_i}^+X)^{\otimes k-1}X_{t_{i}+, t_{i+1}-}-\frac{1}{k!}D^kF(X_{t_{i}})\Big(\Delta_{t_i}^+X+X_{t_{i}+, t_{i+1}-}+\Delta_{t_{i+1}}^-X\Big)^{\otimes k}\bigg)+A_1\\
=&\ A_2+B_1+B_2+\sum_{i=0}^{N-1}\sum_{k=1}^{\left \lfloor p \right \rfloor }-\frac{1}{k!}D^kF(X_{t_{i}})(\Delta_{t_i}^+X)^{\otimes k}+\sum_{i=0}^{N-1}\sum_{k=1}^{\left \lfloor p \right \rfloor }-\frac{1}{k!}D^kF(X_{t_{i+1}-})(\Delta_{t_{i+1}}^-X)^{\otimes k}+B_3+A_1\\
=&\ A_2+B_1+B_2+A_3+B_3+A_1,
\end{align*}
where
\begin{align}
A_3:=&\ \sum_{i=0}^{N-1}\sum_{k=1}^{\left \lfloor p \right \rfloor }-\frac{1}{k!}D^kF(X_{t_{i}})(\Delta_{t_i}^+X)^{\otimes k}+\sum_{i=0}^{N-1}\sum_{k=1}^{\left \lfloor p \right \rfloor }-\frac{1}{k!}D^kF(X_{t_{i+1}-})(\Delta_{t_{i+1}}^-X)^{\otimes k}\nonumber\\
B_3:=&\ \sum_{i=0}^{N-1}\sum_{k=1}^{\left \lfloor p \right \rfloor }\frac{1}{k!}\Big(D^kF(X_{t_{i+1}-})-D^kF(X_{t_{i}})\Big)(\Delta_{t_{i+1}}^-X)^{\otimes k}+\sum_{i=0}^{N-1}\sum_{k=2}^{\left \lfloor p \right \rfloor }\frac{1}{k!}\Big(D^kF(X_{t_{i}+})-D^kF(X_{t_{i}}) \Big)X_{t_{i}+, t_{i+1}-}^{\otimes k}\nonumber \\
&-\sum_{i=0}^{N-1}\sum_{k=3}^{\left \lfloor p \right \rfloor }\frac{1}{k!}D^kF(X_{t_{i}})\sum_{m=1}^{k-2} C_k^m(\Delta_{t_i}^+X)^{\otimes m}X_{t_{i}+, t_{i+1}-}^{\otimes k-m}-\sum_{i=0}^{N-1}\sum_{k=2}^{\left \lfloor p \right \rfloor }\frac{1}{k!}D^kF(X_{t_{i}})\sum_{m=1}^{k-1}C_k^m(\Delta_{t_{i+1}}^-X)^{\otimes m}X_{t_{i}+, t_{i+1}-}^{\otimes k-m} \nonumber\\
&-\sum_{i=0}^{N-1}\sum_{k=2}^{\left \lfloor p \right \rfloor }\frac{1}{k!}D^kF(X_{t_{i}})\sum_{m=1}^{k-1}C_k^m(\Delta_{t_i}^+X)^{\otimes m}(\Delta_{t_{i+1}}^-X)^{\otimes k-m}.\mlabel{eq:ad0}
\end{align}

We estimate these terms separately. Note that the set of jump points of a finite $p$-variation path is at most countable. By Lemma~\ref{lem:4}, we can choose a partition $\Pi=\{0=t_0<t_1<\cdots<t_N=T\}$ such that for any $[t_i, t_{i+1}]\in \Pi$, $X$ is continuous at either $t_i$ or $t_{i+1}$, and furthermore
$$\sup_{u, v \in (t_i, t_{i+1})} \|X_{u, v}\| < \varepsilon.$$
We divide the remaining proof into two steps. 

\noindent{\bf Step 1. } We prove that $B_1$, $B_2$ and $B_3$ converge to zero along such partitions.
For $B_1$,
\begin{align*}
\|B_1\|=&\ \bigg\|\sum_{i=0}^{N-1}\bigg(F(X_{t_{i+1}-}) - F(X_{t_{i}+})-\sum_{k=1}^{\left \lfloor p \right \rfloor }\frac{1}{k!}D^kF(X_{t_{i}+})X_{t_{i}+, t_{i+1}-}^{\otimes k} \bigg)\bigg\|\\
\le&\ \sum_{i=0}^{N-1}\bigg\|F(X_{t_{i+1}-}) - F(X_{t_{i}+})-\sum_{k=1}^{\left \lfloor p \right \rfloor }\frac{1}{k!}D^kF(X_{t_{i}+})X_{t_{i}+, t_{i+1}-}^{\otimes k}\bigg\|\\
\le&\ \sum_{i=0}^{N-1}C\Big\|X_{t_{i}+, t_{i+1}-}^{\otimes \left \lfloor p \right \rfloor+1}\Big\|\hspace{1cm}(\text{by the Taylor expression of $F$ at $X_{t_{i}+}$})\\
=&\ \sum_{i=0}^{N-1}C\Big\|X_{t_{i}+, t_{i+1}-}\Big\|^{ \left \lfloor p \right \rfloor+1-p}\,\Big\|X_{t_{i}+, t_{i+1}-}\Big\|^{p}\\
<&\ \sum_{i=0}^{N-1}C\varepsilon^{ \left \lfloor p \right \rfloor+1-p}\,\Big\|X_{t_{i}+, t_{i+1}-}\Big\|^{p}\\
=&\ C\varepsilon^{ \left \lfloor p \right \rfloor+1-p}\sum_{i=0}^{N-1}\Big\|X_{t_{i}+, t_{i+1}-}\Big\|^{p}\\
\le&\ C\varepsilon^{ \left \lfloor p \right \rfloor+1-p} \hspace{1cm}\Big(\text{by $\|X\|_{p\text{-var};[0 ,T]}<\infty$ and $C:=C\|X\|_{p\text{-var};[0 ,T]}^{p}$  }\Big).
\end{align*}

For $B_2$,
\begin{align*}
\|B_2\|=&\ \bigg\|\sum_{i=0}^{N-1}\bigg(DF(X_{t_{i}+})-DF(X_{t_{i}})-\sum_{k=2}^{\left \lfloor p \right \rfloor }\frac{1}{(k-1)!}D^kF(X_{t_{i}})(\Delta_{t_i}^+X)^{\otimes k-1} \bigg)X_{t_{i}+, t_{i+1}-}\bigg\|\\
\le&\ \sum_{i=0}^{N-1}\bigg\|\bigg(DF(X_{t_i} + \Delta_{t_i}^+ X)-DF(X_{t_{i}})-\sum_{k=2}^{\left \lfloor p \right \rfloor }\frac{1}{(k-1)!}D^kF(X_{t_{i}})(\Delta_{t_i}^+X)^{\otimes k-1} \bigg)X_{t_{i}+, t_{i+1}-}\bigg\|\\
\le&\ \sum_{i=0}^{N-1}C\Big\|(\Delta_{t_i}^+X)^{\otimes \left \lfloor p \right \rfloor}\Big\|\,\Big\|X_{t_{i}+, t_{i+1}-}\Big\|\hspace{1cm}(\text{by the Taylor expression of $DF$ at $X_{t_{i}}$}).
\end{align*}
Since $\|X\|_{p\text{-var}; [0, T]}<\infty$, for $t\in [0, T]$ and any fixed threshold $\eta > 0$, there are only finitely many points satisfying $\|\Delta_{t}^+X\|>\eta$. Using this property, we estimate the last display as follows.

\begin{align*}
\|B_2\| \le&\ \sum_{\|\Delta_{t_i}^+X\| > \eta} C \Big\|\Delta_{t_i}^+X\Big\|^{\lfloor p \rfloor} \Big\|X_{t_{i}+, t_{i+1}-}\Big\| + \sum_{\|\Delta_{t_i}^+X\| \le \eta} C \Big\|\Delta_{t_i}^+X\Big\|^{\lfloor p \rfloor} \Big\|X_{t_{i}+, t_{i+1}-}\Big\| \\
\le&\ \max_{i} \Big\|X_{t_{i}+, t_{i+1}-}\Big\| \cdot \bigg( C \sum_{\|\Delta_{t_i}^+X\| > \eta} \Big\|\Delta_{t_i}^+X\Big\|^{\lfloor p \rfloor} \bigg) + C \sum_{\|\Delta_{t_i}^+X\| \le \eta} \Big\|\Delta_{t_i}^+X\Big\|^p \Big\|\Delta_{t_i}^+X\Big\|^{\lfloor p \rfloor - p} \Big\|X_{t_{i}+, t_{i+1}-}\Big\| \\
\le&\ \varepsilon \cdot C_\eta + C \eta^{\lfloor p \rfloor + 1 - p} \sum_{\|\Delta_{t_i}^+X\| \le \eta} \Big\|\Delta_{t_i}^+X\Big\|^p \hspace{1cm}\bigg(\text{by setting $C_\eta:=C \sum_{\|\Delta_{t_i}^+X\| > \eta} \Big\|\Delta_{t_i}^+X\Big\|^{\lfloor p \rfloor}$}\bigg)\\
\le&\ \varepsilon \cdot C_\eta + C \eta^{\lfloor p \rfloor + 1 - p} \|X\|_{p\text{-var}; [0, T]}^p.
\end{align*}
Since $\lfloor p \rfloor + 1 > p$, taking the RRS limit as $\varepsilon \to 0$ and then letting the threshold $\eta \to 0$, both terms converge to zero. 

For $B_3$, since $X$ is continuous at either $t_i$ or $t_{i+1}$, we have
\begin{equation}
\Delta_{t_i}^+X\Delta_{t_{i+1}}^-X=0.
\mlabel{eq:ad1}
\end{equation}
First, we handle the first and second summations on the right-hand side of~(\ref{eq:ad0}).
\begin{align}
&\ \sum_{i=0}^{N-1}\sum_{k=1}^{\left \lfloor p \right \rfloor }\frac{1}{k!}\Big(D^kF(X_{t_{i+1}-})-D^kF(X_{t_{i}})\Big)(\Delta_{t_{i+1}}^-X)^{\otimes k} \nonumber\\
=&\ \sum_{i=0}^{N-1} \sum_{k=1}^{\left \lfloor p \right \rfloor } \frac{1}{k!} \bigg(\sum_{m=1}^{\left \lfloor p \right \rfloor -k} \frac{1}{m!} D^{k+m} F(X_{t_i}) (X_{t_i, t_{i+1}-})^{\otimes m}+O\Big(X_{t_i, t_{i+1}-}^{\left \lfloor p \right \rfloor -k+1}\Big)\bigg)(\Delta_{t_{i+1}}^- X)^{\otimes k} \nonumber\\
&\ \hspace{7cm}(\text{by the Taylor expression of $D^kF$ at $X_{t_{i}}$})\nonumber\\
=&\ \sum_{i=0}^{N-1} \sum_{k=1}^{\left \lfloor p \right \rfloor } \frac{1}{k!} \bigg(\sum_{m=1}^{\left \lfloor p \right \rfloor -k} \frac{1}{m!} D^{k+m} F(X_{t_i}) \big(\Delta_{t_i}^+X+X_{t_i+, t_{i+1}-}\big)^{\otimes m}+O\Big(\big(\Delta_{t_i}^+X+X_{t_i+, t_{i+1}-}\big)^{\left \lfloor p \right \rfloor -k+1}\Big)\bigg)(\Delta_{t_{i+1}}^- X)^{\otimes k} \nonumber\\
=&\ \sum_{i=0}^{N-1} \sum_{k=1}^{\left \lfloor p \right \rfloor } \frac{1}{k!} \bigg(\sum_{m=1}^{\left \lfloor p \right \rfloor -k} \frac{1}{m!} D^{k+m} F(X_{t_i}) X_{t_i+, t_{i+1}-}^{\otimes m}+O\Big(X_{t_i+, t_{i+1}-}^{\left \lfloor p \right \rfloor -k+1}\Big)\bigg)(\Delta_{t_{i+1}}^- X)^{\otimes k} \hspace{2cm}(\text{by~(\ref{eq:ad1})})\nonumber\\
=&\ \sum_{i=0}^{N-1} \sum_{k=1}^{\left \lfloor p \right \rfloor } \frac{1}{k!} \sum_{m=1}^{\left \lfloor p \right \rfloor -k} \frac{1}{m!} D^{k+m} F(X_{t_i}) X_{t_i+, t_{i+1}-}^{\otimes m}(\Delta_{t_{i+1}}^- X)^{\otimes k}
+\sum_{i=0}^{N-1} \sum_{k=1}^{\left \lfloor p \right \rfloor } \frac{1}{k!}O\Big(X_{t_i+, t_{i+1}-}^{\left \lfloor p \right \rfloor -k+1}\Big)(\Delta_{t_{i+1}}^- X)^{\otimes k}\nonumber\\
=&\ \sum_{i=0}^{N-1}\sum_{k=2}^{\left \lfloor p \right \rfloor }\frac{1}{k!}D^kF(X_{t_{i}})\sum_{m=1}^{k-1}C_k^m(\Delta_{t_{i+1}}^-X)^{\otimes m}X_{t_{i}+, t_{i+1}-}^{\otimes k-m}+\sum_{i=0}^{N-1} \sum_{k=1}^{\left \lfloor p \right \rfloor } \frac{1}{k!}O\Big(X_{t_i+, t_{i+1}-}^{\left \lfloor p \right \rfloor -k+1}\Big)(\Delta_{t_{i+1}}^- X)^{\otimes k}.\mlabel{eq:ad2}
\end{align}
Similarly,  
\begin{align}
&\ \sum_{i=0}^{N-1}\sum_{k=2}^{\left \lfloor p \right \rfloor }\frac{1}{k!}\Big(D^kF(X_{t_{i}+})-D^kF(X_{t_{i}}) \Big)X_{t_{i}+, t_{i+1}-}^{\otimes k}\nonumber\\
=&\ \sum_{i=0}^{N-1}\sum_{k=3}^{\left \lfloor p \right \rfloor }\frac{1}{k!}D^kF(X_{t_{i}})\sum_{m=1}^{k-2} C_k^m(\Delta_{t_i}^+X)^{\otimes m}X_{t_{i}+, t_{i+1}-}^{\otimes k-m}+\sum_{i=0}^{N-1} \sum_{k=2}^{\left \lfloor p \right \rfloor } \frac{1}{k!}O\Big(X_{t_i+, t_{i+1}-}^{\left \lfloor p \right \rfloor -k+1}\Big)(\Delta_{t_{i}}^+ X)^{\otimes k}.\mlabel{eq:ad3}
\end{align}

Substituting~(\ref{eq:ad1}),~(\ref{eq:ad2}) and~(\ref{eq:ad3}) into~(\ref{eq:ad0}), 
\[B_3=\sum_{i=0}^{N-1} \sum_{k=1}^{\left \lfloor p \right \rfloor } \frac{1}{k!}O\Big(X_{t_i+, t_{i+1}-}^{\left \lfloor p \right \rfloor -k+1}\Big)(\Delta_{t_{i+1}}^- X)^{\otimes k}+\sum_{i=0}^{N-1} \sum_{k=2}^{\left \lfloor p \right \rfloor } \frac{1}{k!}O\Big(X_{t_i+, t_{i+1}-}^{\left \lfloor p \right \rfloor -k+1}\Big)(\Delta_{t_{i}}^+ X)^{\otimes k}.\]
Since $\left \lfloor p \right \rfloor-k+1+k=\left \lfloor p \right \rfloor+1>p$, the same argument as for $B_2$ shows that $B_3$ converges to zero.

\noindent{\bf Step 2. } We deal with $A_1$, $A_2$ and $A_3$. For $A_1$, note that $$\X=\Big(X, \frac{1}{2}X^{\otimes 2}, \ldots, \frac{1}{\left \lfloor p \right \rfloor!}X^{\otimes \left \lfloor p \right \rfloor}\Big)$$ is a reduced rough path and $\Big(DF(X), D^2F(X), \ldots, D^{\left \lfloor p \right \rfloor}F(X)\Big)$ is a reduced $\X$-controlled rough path. Then
\[\text{RRS} - \lim_{\Pi} A_1=\text{RRS} - \lim_{\Pi}\sum_{i=0}^{N-1}\sum_{k=1}^{\left \lfloor p \right \rfloor }\frac{1}{k!}D^kF(X_{t_{i}})X_{t_{i}, t_{i+1}}^{\otimes k}=\int_0^T D F(X_t) d\,\X_t.\]
For $A_2$ and $A_3$, we have
\begin{align*}
A_2+A_3=&\  \sum_{i=0}^{N-1}\Big(\left(F(X_{t_{i+1}})-F(X_{t_{i+1-}}) \right)+\left(F(X_{t_{i+}})-F(X_{t_{i}}) \right)\Big)\\
&+\sum_{i=0}^{N-1}\sum_{k=1}^{\left \lfloor p \right \rfloor }-\frac{1}{k!}D^kF(X_{t_{i}})(\Delta_{t_i}^+X)^{\otimes k}+\sum_{i=0}^{N-1}\sum_{k=1}^{\left \lfloor p \right \rfloor }-\frac{1}{k!}D^kF(X_{t_{i+1}-})(\Delta_{t_{i+1}}^-X)^{\otimes k}\\
=&\ \sum_{i=0}^{N-1}\Big(F(X_{t_{i+1}})-F(X_{t_{i+1-}})-\sum_{k=1}^{\left \lfloor p \right \rfloor }\frac{1}{k!}D^kF(X_{t_{i+1}-})(\Delta_{t_{i+1}}^-X)^{\otimes k} \Big) \\
&+\sum_{i=0}^{N-1}\Big(F(X_{t_{i+}})-F(X_{t_{i}})-\sum_{k=1}^{\left \lfloor p \right \rfloor }\frac{1}{k!}D^kF(X_{t_{i}})(\Delta_{t_i}^+X)^{\otimes k} \Big)\\
=&\ \sum_{0<t\le T}\Big( F(X_t) - F(X_{t-}) - \sum_{k=1}^{\lfloor p \rfloor} \frac{1}{k!} D^k F(X_{t-})(\Delta_t^- X)^{\otimes k}\Big) \\
&+\sum_{0\le t< T}\Big(F(X_{t+}) - F(X_t) - \sum_{k=1}^{\lfloor p \rfloor} \frac{1}{k!} D^k F(X_{t})(\Delta_t^+ X)^{\otimes k}\Big).
\end{align*}

It remains to justify the passage from finite sums to the total jump corrections. First, we establish the absolute convergence of the jump correction series appearing in the main theorem. For any jump point $t \in (0, T]$, let $\mathcal{J}_t^-$ denote the left-jump correction term:
\[ \mathcal{J}_t^- := F(X_t) - F(X_{t-}) - \sum_{k=1}^{\lfloor p \rfloor} \frac{1}{k!} D^k F(X_{t-})(\Delta_t^- X)^{\otimes k}. \]
Since $F \in \mathcal{C}_b^{\lfloor p \rfloor+1}$, Taylor's formula implies $\|\mathcal{J}_t^-\| \le C \|\Delta_t^- X\|^{\lfloor p \rfloor + 1}$. Because $X$ has finite $p$-variation, the sum $\sum_{0 < t \le T} \|\Delta_t^- X\|^p$ is finite, which implies that $\sum_{0 < t \le T} \|\Delta_t^- X\|^{\lfloor p \rfloor + 1}$ converges absolutely since $\lfloor p \rfloor + 1 > p$. A similar argument holds for the right-jump terms $\mathcal{J}_t^+$, where $\mathcal{J}_t^+$ is defined by
\[ \mathcal{J}_t^+ := F(X_{t+}) - F(X_{t}) - \sum_{k=1}^{\lfloor p \rfloor} \frac{1}{k!} D^k F(X_{t})(\Delta_t^+ X)^{\otimes k}. \]

For a given partition $\Pi = \{0=t_0 < t_1 < \dots < t_N=T\}$, the terms $A_2$ and $A_3$ can be rearranged as follows:
\begin{align*}
A_2+A_3=&\  \sum_{i=0}^{N-1}\Big(\left(F(X_{t_{i+1}})-F(X_{t_{i+1-}}) \right)+\left(F(X_{t_{i+}})-F(X_{t_{i}}) \right)\Big)\\
&+\sum_{i=0}^{N-1}\sum_{k=1}^{\left \lfloor p \right \rfloor }-\frac{1}{k!}D^kF(X_{t_{i}})(\Delta_{t_i}^+X)^{\otimes k}+\sum_{i=0}^{N-1}\sum_{k=1}^{\left \lfloor p \right \rfloor }-\frac{1}{k!}D^kF(X_{t_{i+1}-})(\Delta_{t_{i+1}}^-X)^{\otimes k}\\
=&\ \sum_{i=0}^{N-1}\Big(F(X_{t_{i+1}})-F(X_{t_{i+1-}})-\sum_{k=1}^{\left \lfloor p \right \rfloor }\frac{1}{k!}D^kF(X_{t_{i+1}-})(\Delta_{t_{i+1}}^-X)^{\otimes k} \Big) \\
&+\sum_{i=0}^{N-1}\Big(F(X_{t_{i+}})-F(X_{t_{i}})-\sum_{k=1}^{\left \lfloor p \right \rfloor }\frac{1}{k!}D^kF(X_{t_{i}})(\Delta_{t_i}^+X)^{\otimes k} \Big).
\end{align*}
As the partition $\Pi$ is refined in the RRS sense, we consider the limit over refinements. 
For any $\varepsilon> 0$, there are only finitely many jump points such that $\|\Delta_t^{-} X\|, \|\Delta_t^{+} X\| > \varepsilon$.
 By selecting a refinement $\Pi_\varepsilon$ that includes these ``large jumps" as endpoints, the contribution from ``small jumps" inside the partition intervals is controlled by the tail of the absolutely convergent series $\sum \|\Delta_t^{-} X\|^{\lfloor p \rfloor + 1}$, $\sum \|\Delta_t^{+} X\|^{\lfloor p \rfloor + 1}$. Therefore, the RRS limit of the finite sums yields the total jump corrections:
\begin{align*}
\text{RRS}-\lim_{{\Pi }} (A_2+A_3) =&\ \sum_{0<t\le T}\Big( F(X_t) - F(X_{t-}) - \sum_{k=1}^{\lfloor p \rfloor} \frac{1}{k!} D^k F(X_{t-})(\Delta_t^- X)^{\otimes k}\Big) \\
&+\sum_{0\le t< T}\Big(F(X_{t+}) - F(X_t) - \sum_{k=1}^{\lfloor p \rfloor} \frac{1}{k!} D^k F(X_{t})(\Delta_t^+ X)^{\otimes k}\Big).
\end{align*}

In conclustion, combining the limits of $A_1$, $A_2$, $A_3$, $B_1$, $B_2$, and $B_3$ yields the required~\eqref{it:bitem1}.
\end{proof}

\section{Stochastic and financial applications}\label{sec:4}

Theorem~\ref{thm:main} is entirely pathwise. Consequently, once a random signal has sample paths of finite $p$-variation, the pathwise It\^o-type formula can be applied path by path. This simple observation makes the theorem particularly well suited to stochastic models with jumps: the probabilistic input is carried by the sample-path regularity, while the analytic structure is still governed by the reduced rough integral from Proposition~\ref{prop:2}.

In this section we illustrate this point in several directions. We first state an almost sure version of Theorem~\ref{thm:main}. We then discuss two representative classes of stochastic drivers, namely pure-jump L\'evy-type models and mixed rough Gaussian-jump signals. After that, we explain how the formula propagates through random differential equations. We conclude with a financial application that gives a pathwise decomposition of cumulative log-wealth.

\subsection{An almost sure version of the main theorem}\label{subsec:stochastic_version}

We begin with the most direct stochastic consequence of our result. Since Theorem~\ref{thm:main} is deterministic, no new proof is needed once the finite $p$-variation property is known almost surely.

\begin{proposition}\label{prop:as_stochastic_formula}
Let $(\Omega,\mathcal{F},\mathbb{P})$ be a probability space, and let $X:[0,T]\times\Omega\to V$ be a c\`adl\`ag stochastic process. Assume that, for some $p\geq 1$,
\[
\|X(\omega)\|_{p\text{-var}; [0, T]}<\infty
\]
for almost every $\omega\in\Omega$. Let $F$ satisfy the regularity hypothesis in Theorem~\ref{thm:main}. Then, for almost every $\omega\in\Omega$,
\[
F(X_T)-F(X_0)
=
\int_0^T DF(X_t)\,d\X_t
+\sum_{0<t\leq T}\left(
F(X_t)-F(X_{t-})-\sum_{k=1}^{\lfloor p\rfloor}\frac{1}{k!}D^kF(X_{t-})(\Delta_t^-X)^{\otimes k}
\right),
\]
where $\Delta_t^-X=X_t-X_{t-}$, and the integral is understood in the RRS sense of Proposition~\ref{prop:2}.
\end{proposition}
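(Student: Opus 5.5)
The plan is to apply Theorem~\ref{thm:main}\eqref{it:bitem2} pathwise, on a full-measure set of sample paths. Let
\[
\Omega_0:=\{\omega\in\Omega:\ t\mapsto X_t(\omega)\text{ is c\`adl\`ag and }\|X(\omega)\|_{p\text{-var};[0,T]}<\infty\}.
\]
By the c\`adl\`ag assumption and the finite $p$-variation hypothesis, $\Omega_0$ contains a set of probability one. If measurability of $\Omega_0$ is in question, it is enough that $\Omega_0$ contains a measurable set of full measure. The conclusion is only claimed almost surely, so we never need measurability of the random variables appearing in the formula.

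Fix $\omega\in\Omega_0$ and set $x:=X(\omega):[0,T]\to V$. This is a deterministic c\`adl\`ag path of finite $p$-variation, and $F\in\mathcal{C}^{\lfloor p\rfloor+1}_b(V,U)$ by hypothesis. We then carry out three checks.
\begin{enumerate}
\item Proposition~\ref{prop:1} shows that $\x(\omega)=(1,x,\tfrac12 x^{\otimes 2},\dots,\tfrac1{\lfloor p\rfloor!}x^{\otimes\lfloor p\rfloor})$ is a reduced rough path.
\item The lemma following Proposition~\ref{prop:1} shows that $(DF(x),\dots,D^{\lfloor p\rfloor}F(x))$ is a reduced $\x(\omega)$-controlled path.
\item Proposition~\ref{prop:2} then shows that the RRS integral $\int_0^T DF(x_t)\,d\x_t$ exists.
\end{enumerate}

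Theorem~\ref{thm:main}\eqref{it:bitem2} now yields the identity for $x$, with the left-jump series converging absolutely, as shown in the proof of the theorem. Since $\omega\in\Omega_0$ was arbitrary, the identity holds on $\Omega_0$, hence almost surely.

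There is no genuine obstacle here. The only point needing care is that the right-jump correction drops out. Right-continuity gives $\Delta^+_tx=0$ for every $t$, so each summand $F(x_{t+})-F(x_t)-\sum_k\frac1{k!}D^kF(x_t)(\Delta_t^+x)^{\otimes k}$ vanishes identically. This is exactly the reduction already recorded in Theorem~\ref{thm:main}\eqref{it:bitem2}.
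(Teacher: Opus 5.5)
Your proposal is correct and follows the same route as the paper, whose proof simply applies the c\`adl\`ag case of Theorem~\ref{thm:main} to each trajectory on the full-measure set where $X$ has finite $p$-variation. Your additional checks (existence of the RRS integral via Propositions~\ref{prop:1} and~\ref{prop:2}, vanishing of the right-jump terms, and the measurability remark) only spell out what that one-line argument relies on.
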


\begin{proof}
Apply the c\`adl\`ag case of Theorem~\ref{thm:main} to each trajectory on the full-measure set where $X$ has finite $p$-variation.
\end{proof}

Proposition~\ref{prop:as_stochastic_formula} shows that the passage from deterministic paths to stochastic processes is immediate once one works path by path. In particular, when $X$ has continuous sample paths, the jump sum disappears and we recover the continuous It\^o-type formula. Moreover, whenever all terms are integrable, one may take expectations in Proposition~\ref{prop:as_stochastic_formula} to obtain a weak form of the identity.

\subsection{Representative stochastic drivers}\label{subsec:stochastic_drivers}

We next record two basic classes of stochastic processes to which Proposition~\ref{prop:as_stochastic_formula} applies. The first class concerns pure-jump finite-variation models, while the second highlights the role of arbitrary regularity in mixed rough systems.

\begin{proposition}\label{prop:levy_driver_application}
Let $X$ be a c\`adl\`ag L\'evy process with no Gaussian part and with L\'evy measure $\nu$ satisfying
\[
\int_{|z|\leq 1}|z|\,\nu(dz)<\infty.
\]
Then almost every sample path of $X$ has finite $1$-variation on $[0,T]$. Consequently, for every $F$ satisfying the regularity hypothesis in Theorem~\ref{thm:main},
\[
F(X_T)-F(X_0)
=
\int_0^T DF(X_t)\,d\X_t
+\sum_{0<t\leq T}\Big(
F(X_t)-F(X_{t-})-DF(X_{t-})\Delta_t^-X
\Big)
\]
almost surely.
\end{proposition}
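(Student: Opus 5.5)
The proof splits into a probabilistic part (finite $1$-variation of almost every sample path) and a deterministic part (the identity itself), which will follow from Proposition~\ref{prop:as_stochastic_formula}.

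\textbf{Step 1: structure of the paths.} By the L\'evy--It\^o decomposition, a L\'evy process without Gaussian part has triplet $(b,0,\nu)$. Under $\int_{|z|\le 1}|z|\,\nu(dz)<\infty$, the compensated small-jump martingale can be decompensated, because its compensator $t\int_{|z|\le1} z\,\nu(dz)$ is finite. This gives, almost surely,
\[
X_t = X_0 + \gamma t + \sum_{0<s\le t}\Delta_s^- X,
\]
where $\gamma$ is the drift $b-\int_{|z|\le 1}z\,\nu(dz)$. For the decomposition to be meaningful, the jump sum must converge absolutely.

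\textbf{Step 2: absolute summability of the jumps, the main point.} The jump measure $N(ds,dz)$ is a Poisson random measure with intensity $ds\,\nu(dz)$.
\begin{itemize}
\item For the small jumps,
\[
\mathbb{E}\sum_{0<s\le T}|\Delta_s^-X|\mathbf 1_{\{|\Delta_s^-X|\le1\}}=T\int_{|z|\le1}|z|\,\nu(dz)<\infty,
\]
so this sum is almost surely finite.
\item For the large jumps, $\nu(\{|z|>1\})<\infty$, so there are almost surely only finitely many jumps of size larger than $1$ on $[0,T]$.
\end{itemize}
Hence $\sum_{0<s\le T}|\Delta_s^-X|<\infty$ almost surely.

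Now take any partition $\{t_i\}$. By the triangle inequality,
\[
\sum_i |X_{t_i,t_{i+1}}|\le |\gamma|T+\sum_{0<s\le T}|\Delta_s^-X|,
\]
so $\|X\|_{1\text{-var};[0,T]}<\infty$ almost surely. In a Banach-space-valued setting the same argument works with norms in place of absolute values.

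\textbf{Step 3: the formula.} Sample paths of a L\'evy process are c\`adl\`ag. Proposition~\ref{prop:as_stochastic_formula} with $p=1$ (so $\lfloor p\rfloor=1$) then applies on the full-measure set from Step 2. The Taylor correction has only the $k=1$ term, which is $DF(X_{t-})\Delta_t^-X$, and this gives exactly the stated identity. The integral is the RRS limit of the sums $\sum DF(X_{t_i})X_{t_i,t_{i+1}}$, i.e.\ a Riemann--Stieltjes-type integral against a bounded-variation path.

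The only nontrivial input is Step 2. The integrability condition on $\nu$ is what makes the small jumps summable. Without it one only has $\int |z|^2\wedge 1\,d\nu<\infty$, and finite $1$-variation fails.
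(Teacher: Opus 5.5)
Your proposal is correct and follows the same route as the paper: establish almost sure finite $1$-variation, then apply the c\`adl\`ag case of Theorem~\ref{thm:main} via Proposition~\ref{prop:as_stochastic_formula} with $p=1$, so that only the $k=1$ Taylor term survives in the jump correction. The only difference is that the paper cites Sato for the finite-variation property, whereas you derive it yourself from the L\'evy--It\^o decomposition and the compensation formula $\mathbb{E}\sum_{0<s\le T}|\Delta_s^-X|\mathbf{1}_{\{|\Delta_s^-X|\le 1\}}=T\int_{|z|\le 1}|z|\,\nu(dz)$; this is a correct, self-contained replacement for that citation.
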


\begin{proof}
Under the stated assumption, $X$ is a finite-variation L\'evy process (see, e.g.,~\cite{Sa99}), hence its sample paths have finite $1$-variation almost surely on compact intervals. The conclusion follows by applying the c\`adl\`ag case of Theorem~\ref{thm:main} (as stated in Proposition~\ref{prop:as_stochastic_formula}) with $p=1$.
\end{proof}

Proposition~\ref{prop:levy_driver_application} already covers many pure-jump models used in finance, insurance, and queueing theory. In particular, for a compound Poisson process the jump correction is simply a finite random sum over jump times.

The next example is more revealing from the point of view of roughness. It shows that the arbitrary-regularity extension proved in this paper is genuinely useful in stochastic settings where the continuous part of the signal is rough.

\begin{proposition}\label{prop:mixed_driver_application}
Let $B^H$ be a fractional Brownian motion with Hurst parameter $H\in(0,1)$, and let $J$ be a c\`adl\`ag stochastic process such that
\[
\|J\|_{q\text{-var};[0,T]}<\infty
\]
almost surely for some $q\geq 1$. Define
\[
X_t:=X_0+B_t^H+J_t,\quad \forall t\in [0, T].
\]
Then, for every $p>\max\{1/H,q\}$, almost every sample path of $X$ has finite $p$-variation on $[0,T]$. Hence, for every such $p$ and every $F$ satisfying the regularity hypothesis in Theorem~\ref{thm:main},
\[
F(X_T)-F(X_0)
=
\int_0^T DF(X_t)\,d\X_t
+\sum_{0<t\leq T}\left(
F(X_t)-F(X_{t-})-\sum_{k=1}^{\lfloor p\rfloor}\frac{1}{k!}D^kF(X_{t-})(\Delta_t^-X)^{\otimes k}
\right)
\]
almost surely.
\end{proposition}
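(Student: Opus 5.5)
The argument splits into a probabilistic input (sample-path regularity of $X$) and a deterministic output (Proposition~\ref{prop:as_stochastic_formula}). The only thing to prove is that almost every path of $X$ has finite $p$-variation for every $p>\max\{1/H,q\}$. The formula then follows by applying Proposition~\ref{prop:as_stochastic_formula} to the c\`adl\`ag process $X$, since $X$ is c\`adl\`ag as a sum of a continuous process and a c\`adl\`ag one.

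For the fractional Brownian component I would use the Kolmogorov--Chentsov continuity theorem. Since $B^H_t-B^H_s$ is Gaussian with variance $|t-s|^{2H}$, we have $\mathbb{E}|B^H_t-B^H_s|^{m}=C_m|t-s|^{mH}$ for every $m\ge 1$. Hence $B^H$ has a continuous modification that is $\gamma$-H\"older on $[0,T]$ for every $\gamma<H$, almost surely. Fix $p>1/H$ and choose $\gamma\in(1/p,H)$. On the full-measure event where $\|B^H\|_{\gamma\text{-H\"ol}}<\infty$, any partition satisfies
\[
\sum_{[s,t]\in\Pi}|B^H_{s,t}|^p\le \|B^H\|_{\gamma\text{-H\"ol}}^p\sum_{[s,t]\in\Pi}|t-s|^{\gamma p}\le \|B^H\|_{\gamma\text{-H\"ol}}^p\,T^{\gamma p}.
\]
The second inequality uses $\gamma p\ge 1$, which gives $|t-s|^{\gamma p}\le |t-s|\,T^{\gamma p-1}$. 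So $\|B^H\|_{p\text{-var}}<\infty$ almost surely. Because the rationals are countable, this event can be taken independent of $p$: intersect over rational $\gamma<H$.

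For the jump component, note that $p$-variation norms are monotone in the exponent. If $p\ge q$, then $\ell^q\subset\ell^p$ with $\|\cdot\|_{\ell^p}\le\|\cdot\|_{\ell^q}$, so $\|J\|_{p\text{-var}}\le\|J\|_{q\text{-var}}<\infty$ almost surely. The $p$-variation seminorm is subadditive by Minkowski's inequality on $\ell^p$ applied partition by partition, and the constant $X_0$ contributes nothing to increments. Therefore
\[
\|X\|_{p\text{-var}}\le\|B^H\|_{p\text{-var}}+\|J\|_{p\text{-var}}<\infty
\]
on the intersection of two full-measure events.

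The only mildly delicate step is the fBm bound. It requires working with the continuous modification and making sure the null set does not depend on $p$, which the countable-intersection argument handles. The rest is routine. The conclusion then follows from Proposition~\ref{prop:as_stochastic_formula}.
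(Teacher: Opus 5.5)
Your proof is correct and follows the paper's route. The paper uses finite $p$-variation of $B^H$ for $p>1/H$, monotonicity of $p$-variation in the exponent for $J$, and the triangle inequality for $X=X_0+B^H+J$, then concludes by Proposition~\ref{prop:as_stochastic_formula}; you additionally prove what the paper only cites or leaves implicit, namely the Kolmogorov--Chentsov/H\"older-to-variation estimate, Minkowski's inequality partition by partition, and the fact that $X$ is c\`adl\`ag.
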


\begin{proof}
Almost every trajectory of $B^H$ has finite $p$-variation for every $p>1/H$ (see, e.g.,~\cite[Chapter 10]{FH20} or~\cite{Ly98}). Since $J$ has finite $q$-variation almost surely, it also has finite $p$-variation for every $p>q$. Therefore $X=B^H+J$ has finite $p$-variation almost surely for every $p>\max\{1/H,q\}$. The conclusion then follows from Proposition~\ref{prop:as_stochastic_formula}.
\end{proof}

When $H=\frac12$, Proposition~\ref{prop:mixed_driver_application} includes Brownian motion with jumps for any $p>2$. More importantly, when $H\leq \frac13$, every admissible choice of $p$ satisfies $p>3$, so this example lies beyond the regime $2\leq p<3$ considered in \cite{FZ18}. This illustrates why the arbitrary-regularity framework of Theorem~\ref{thm:main} is not merely a technical extension.

\subsection{Random differential equations with jumps}\label{subsec:random_equations}

The previous subsection dealt with the driving processes themselves. We now turn to nonlinear dynamics driven by such processes.
This is the setting in which a pathwise It\^o-type formula is especially useful, since one can pass from the driver to observables of the solution without leaving the same deterministic framework.

Consider the random differential equation
\begin{equation}
dY_t=V(Y_t)\,dX_t,\qquad Y_0=y_0,
\label{eq:rde}
\end{equation}
where $X$ is a c\`adl\`ag stochastic process whose sample paths have finite $p$-variation almost surely. Assume that, for almost every trajectory, the equation admits a c\`adl\`ag solution $Y$ with finite $p$-variation on $[0,T]$.

\begin{proposition}\label{prop:random_equation_application}
Let $Y$ be a c\`adl\`ag solution of the random differential equation~(\ref{eq:rde}) such that $\|Y(\omega)\|_{p\text{-var};[0,T]} < \infty$ for almost every $\omega \in \Omega$. Then, for every observable $F$ satisfying the regularity hypothesis in Theorem~\ref{thm:main}, the following identity holds almost surely:
\[
F(Y_T)-F(Y_0)
=
\int_0^T DF(Y_t)\,d\Y_t
+\sum_{0<t\leq T}\left(
F(Y_t)-F(Y_{t-})-\sum_{k=1}^{\lfloor p\rfloor}\frac{1}{k!}D^kF(Y_{t-})(\Delta_t^-Y)^{\otimes k}
\right).
\]
Furthermore, whenever the substitution rule is available for the chosen solution concept, one has
\[
\int_0^T DF(Y_t)\,d\Y_t
=
\int_0^T DF(Y_t)V(Y_t)\,d\X_t.
\]
\end{proposition}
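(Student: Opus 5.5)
The plan is to reduce the first identity to Proposition~\ref{prop:as_stochastic_formula}, applied with $Y$ in place of $X$. Let $\Omega_0$ be the full-measure set of $\omega$ for which $Y(\omega)$ is c\`adl\`ag and $\|Y(\omega)\|_{p\text{-var};[0,T]}<\infty$. For each $\omega\in\Omega_0$, the path $t\mapsto Y_t(\omega)$ satisfies the hypotheses of Theorem~\ref{thm:main}. By Proposition~\ref{prop:1}, $\Y=(1,Y,\tfrac12 Y^{\otimes 2},\dots,\tfrac{1}{\lfloor p\rfloor!}Y^{\otimes\lfloor p\rfloor})$ is a reduced rough path of finite $p$-variation. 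By the lemma on $(DF(Y),\dots,D^{\lfloor p\rfloor}F(Y))$, this tuple is a reduced $\Y$-controlled path, so Proposition~\ref{prop:2} defines $\int_0^T DF(Y_t)\,d\Y_t$ as an RRS limit. Part~\eqref{it:bitem2} of Theorem~\ref{thm:main} then gives the displayed identity for that $\omega$. Since $\Omega_0$ has full measure, the identity holds almost surely. Equivalently, this is Proposition~\ref{prop:as_stochastic_formula} with the process $Y$. Note that nothing about the equation \eqref{eq:rde} is used in this part.

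For the second assertion, the statement is conditional on "the substitution rule being available for the chosen solution concept". The proof should make explicit which property is being assumed. I would formulate it as follows. For every $\omega\in\Omega_0$, the solution $Y$ is a reduced $\X$-controlled path with Gubinelli-type derivatives $Y^{(k)}$ built from $V$ and its derivatives, with $Y^{(0)}=Y$, $Y^{(1)}=V(Y)$, and so on. Moreover, the local expansions $Y_{s,t}=\sum_{k}Y^{(k-1)}_s\x^k_{s,t}+R_{s,t}$ hold with remainder $R_{s,t}=O(c(s,t)^{(\lfloor p\rfloor+1)/p})$ away from jumps, and the jumps of $Y$ are given by the prescribed jump map of the solution concept.

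Under this assumption, I would substitute the expansion of $Y_{t_i,t_{i+1}}$ into each Riemann summand $\sum_{k}\frac{1}{k!}D^kF(Y_{t_i})Y^{\otimes k}_{t_i,t_{i+1}}$. Regrouping by powers of $\x$ turns this summand into $\Xi_{t_i,t_{i+1}}:=\sum_j Z^{j}_{t_i}\x^{j+1}_{t_i,t_{i+1}}$, where $Z=(DF(Y)V(Y),\dots)$ is the composed controlled path, plus an error term.

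The error terms are of order $c^{(\lfloor p\rfloor+1)/p}$ on intervals without large jumps, so they vanish in the RRS limit. The argument is the same one used for $B_1,B_2$ in the proof of Theorem~\ref{thm:main}: separate the finitely many large jumps and use that $\lfloor p\rfloor+1>p$. Uniqueness of the RRS limit in Lemma~\ref{lem:3} then identifies the two integrals.

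The main obstacle is that this second step is not determined by the data given: the equality can fail for solution concepts, such as Marcus-type ones, whose jump rules differ. I would therefore state the substitution rule explicitly as a hypothesis, namely the controlled-path expansion together with the jump rule, and verify only the RRS identification under that hypothesis. I would not attempt to derive it from \eqref{eq:rde} in general.
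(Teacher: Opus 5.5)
Your first paragraph is correct and matches the paper's proof. The paper's proof is the single observation that Proposition~\ref{prop:as_stochastic_formula} (the c\`adl\`ag case of Theorem~\ref{thm:main}) applies to $Y$ path by path; as you note, \eqref{eq:rde} plays no role. The paper gives no argument for the second clause, because ``whenever the substitution rule is available'' is itself the hypothesis: that identity \emph{is} the substitution rule (cf.\ the Remark that follows). Nothing more is required there.

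Your attempt to derive that clause from a weaker hypothesis (a controlled expansion away from jumps plus a prescribed jump map) fails. The failing step is the claim that the errors vanish once the large jumps are separated. Take an interval $[t_i,t]$ ending at a large jump. As $t_i\uparrow t$, the two Riemann summands converge to
\[
\sum_{k=1}^{\lfloor p\rfloor}\tfrac1{k!}D^kF(Y_{t-})(\Delta_t^-Y)^{\otimes k}
\quad\text{and}\quad
\sum_{j=0}^{\lfloor p\rfloor-1}\tfrac{1}{(j+1)!}Z^j_{t-}(\Delta_t^-X)^{\otimes(j+1)} .
\]
For $\lfloor p\rfloor\ge2$ these differ in general, whatever the jump rule, and the difference carries no small factor. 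In the proof of Theorem~\ref{thm:main}, such pure-jump terms are exactly the ones collected in $A_2+A_3$, and they survive as the jump sums. The $B_1,B_2$ argument only removes products of a jump with a small increment $X_{t_i+,t_{i+1}-}$.

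Here is a concrete counterexample. Take the scalar equation $dY_t=Y_t\,dX_t$ with $p\in[2,3)$, $Y_0=y_0\neq0$, and $X_t=0$ for $t<\frac12$, $X_t=a$ for $t\ge\frac12$. Any solution concept with a jump map makes $Y$ constant apart from one jump $b$ at $t=\frac12$, so your hypotheses hold trivially. The two RRS limits are
\[
F'(y_0)\,b+\tfrac12F''(y_0)\,b^2
\quad\text{and}\quad
F'(y_0)\,y_0a+\tfrac12\big(F''(y_0)\,y_0^2+F'(y_0)\,y_0\big)a^2 .
\]
Since $F'(y_0)$ and $F''(y_0)$ can be prescribed independently, equality for all $F$ would force $b=y_0(a+a^2/2)$ and $b^2=y_0^2a^2$. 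This is impossible for $a\notin\{0,-4\}$.

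So under your hypotheses you only get the substitution identity plus an absolutely convergent, generally nonzero sum of jump mismatches. Your argument does go through in the continuous case. It also works for $\lfloor p\rfloor=1$ with the forward rule $\Delta_t^-Y=V(Y_{t-})\Delta_t^-X$, since then the two jump-level summands coincide. Otherwise you should either keep the second identity as an assumption, as the paper does, or add an explicit compatibility condition at the jump times. Note also that requiring an expansion of $Y$ in the symmetric lift $\frac1{k!}X^{\otimes k}$ already excludes geometric solutions with non-commuting vector fields, where the area of the driver enters.
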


\begin{proof}
Since $Y$ is a c\`adl\`ag process with finite $p$-variation almost surely, the conclusion follows immediately from Proposition~\ref{prop:as_stochastic_formula}.
\end{proof}

\begin{remark}
It should be emphasized that the validity of the substitution rule
$$\int_{0}^{T} DF(Y_t)\,d\Y_t = \int_{0}^{T} DF(Y_t) V(Y_t)\,d\X_t$$
in Proposition~\ref{prop:random_equation_application} is contingent upon the chosen solution theory for RDEs with jumps. In the rough path setting, different solution concepts, such as Marcus-type geometric solutions and controlled rough path solutions, together with their corresponding rough integral constructions, may lead to variations in the form of the substitution formula. While Theorem~\ref{thm:main} provides a general pathwise algebraic decomposition, its application to specific dynamical systems requires ensuring that the regularity of the vector field $V$ and the lifting of the rough path are consistent with the RRS integration framework established in Proposition~\ref{prop:2}. For a thorough discussion on the consistency between substitution rules and solution concepts in the jump setting, we refer the reader to Friz and Zhang~\cite{FZ18} and the references therein.
\end{remark}

Proposition~\ref{prop:random_equation_application} provides a chain rule for nonlinear observables of random systems with jumps. Combined with Subsection~\ref{subsec:stochastic_drivers}, it yields a unified pathwise treatment of L\'evy-driven equations, mixed fractional-jump systems, and other non-semimartingale models.

\subsection{A financial application: pathwise log-wealth decomposition}\label{subsec:finance_application}

We finally discuss a simple application to mathematical finance. The key point is that Theorem~\ref{thm:main} converts multiplicative price or wealth dynamics into an additive decomposition for logarithmic returns, while keeping the jump correction explicit.

Let $W$ be a positive c\`adl\`ag wealth process with finite $p$-variation almost surely. Assume that, almost surely, the range of $W$ on $[0,T]$ is contained in a compact interval $[m,M]\subseteq (0,\infty)$. Choose a function $\varphi$ satisfying the regularity hypothesis in Theorem~\ref{thm:main} such that $\varphi(x)=\log x$ on $[m,M]$. Applying Theorem~\ref{thm:main} to $\varphi(W)$ yields the following identity.

\begin{proposition}\label{prop:finance_application}
Under the above assumptions,
\[
\log W_T-\log W_0
=
\int_0^T \frac{1}{W_t}\,d{\bf W}_t
+\sum_{0<t\leq T}\left(
\log\Bigl(1+\frac{\Delta_t^-W}{W_{t-}}\Bigr)-\sum_{k=1}^{\lfloor p\rfloor}\frac{(-1)^{k-1}}{k}\Bigl(\frac{\Delta_t^-W}{W_{t-}}\Bigr)^k
\right)
\]
almost surely, where the integral is the reduced rough integral defined in Proposition~\ref{prop:2}.
\end{proposition}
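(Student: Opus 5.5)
The plan is to apply the c\`adl\`ag case of Theorem~\ref{thm:main} (equivalently Proposition~\ref{prop:as_stochastic_formula}) pathwise with $V=U=\RR$ and $F=\varphi$, and then to rewrite each term using the explicit derivatives of the logarithm on $[m,M]$. First I fix $\omega$ in the full-measure set on which $W$ is c\`adl\`ag, has finite $p$-variation, and takes values in $[m,M]$. The left limits $W_{t-}$ lie in the closed interval $[m,M]$ as limits of points of it, so every evaluation point that appears in the formula lies in $[m,M]$. I also need to justify that $\varphi$ exists. Since $\log$ is smooth on a neighbourhood of $[m,M]$, I will multiply it by a smooth cutoff equal to $1$ on $[m,M]$ and supported in $(m/2,2M)$. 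The resulting $\varphi$ is in $\mathcal{C}^{\lfloor p\rfloor+1}_b(\RR,\RR)$ and satisfies $\varphi=\log$ on $[m,M]$.

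Next, Theorem~\ref{thm:main}\eqref{it:bitem2} gives
\[
\varphi(W_T)-\varphi(W_0)=\int_0^T D\varphi(W_t)\,d{\bf W}_t+\sum_{0<t\le T}\Big(\varphi(W_t)-\varphi(W_{t-})-\sum_{k=1}^{\lfloor p\rfloor}\frac{1}{k!}D^k\varphi(W_{t-})(\Delta_t^-W)^k\Big).
\]
On $[m,M]$ we have $D^k\varphi(x)=(-1)^{k-1}(k-1)!\,x^{-k}$. This gives $\frac{1}{k!}D^k\varphi(W_{t-})(\Delta_t^-W)^k=\frac{(-1)^{k-1}}{k}\big(\Delta_t^-W/W_{t-}\big)^k$. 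Also $\varphi(W_t)-\varphi(W_{t-})=\log(W_t/W_{t-})=\log\big(1+\Delta_t^-W/W_{t-}\big)$, which is well defined because $W_t/W_{t-}>0$. The left-hand side equals $\log W_T-\log W_0$.

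For the integral term, I will interpret the notation $\int_0^T \frac{1}{W_t}\,d{\bf W}_t$ as the reduced rough integral of the controlled path $(D\varphi(W),\dots,D^{\lfloor p\rfloor}\varphi(W))$, as in Proposition~\ref{prop:2}. Its Riemann sums $\sum_i\sum_k\frac{1}{k!}D^k\varphi(W_{t_i})W_{t_i,t_{i+1}}^k=\sum_i\sum_k\frac{(-1)^{k-1}}{k}W_{t_i}^{-k}W_{t_i,t_{i+1}}^k$ only involve values of $W$ at partition points. Those values lie in $[m,M]$, so the sums, and hence the RRS limit, do not depend on the choice of extension $\varphi$.

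The main (mild) obstacle is this last point. I have to make sure that the choice of $\varphi$ off $[m,M]$ enters neither the integral nor the jump terms, and that the integral notation with integrand $1/W_t$ is read in this reduced sense, with higher-order components $(-1)^{k-1}(k-1)!\,W_t^{-k}$. Once every evaluation point is checked to be in $[m,M]$, the rest is a direct substitution.
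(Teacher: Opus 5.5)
Your proposal is correct and follows the paper's own proof: apply the c\`adl\`ag case of Theorem~\ref{thm:main} to the extension $\varphi$, then substitute $D^k\log x=(-1)^{k-1}(k-1)!\,x^{-k}$ along the path. Your additional checks make explicit what the paper's one-line argument leaves implicit: building $\varphi$ with a cutoff, noting that the left limits $W_{t-}$ lie in $[m,M]$, and observing that the Riemann sums only involve values at partition points and so do not depend on the extension.
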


\begin{proof}
Apply Theorem~\ref{thm:main} to $\varphi$. Since $\varphi=\log$ on the range of $W$, we may replace $\varphi$ and its derivatives by $\log$ and its derivatives along the path. The identity then follows from
\[
\frac{d^k}{dx^k}\log x = \frac{(-1)^{k-1}(k-1)!}{x^k},\qquad k\geq 1. \qedhere
\]
\end{proof}

A particularly transparent case is obtained when the wealth process is driven by a return signal $R$ through the multiplicative equation
\[
dW_t=W_{t-}\pi_{t-}\,dR_t,\qquad W_0>0,
\]
where $R$ is a c\`adl\`ag process of finite $p$-variation and $\pi$ is the trading strategy. If the reduced rough integral $\int_0^T \pi_{t-}\,dR_t$ is well defined, the substitution rule holds, and $1+\pi_{t-}\Delta_t^-R>0$ at every jump time, then Proposition~\ref{prop:finance_application} becomes
\[
\log W_T-\log W_0
=
\int_0^T \pi_{t-}\,d{\bf R_t}
+\sum_{0<t\leq T}\left(\log(1+\pi_{t-}\Delta_t^-R)-\sum_{k=1}^{\lfloor p\rfloor}\frac{(-1)^{k-1}}{k}(\pi_{t-}\Delta_t^-R)^k\right).
\]
This formula may be read as a pathwise decomposition of cumulative log-return into a reduced rough integral term and an explicit jump correction. In the finite-variation case $p=1$, the correction reduces to the familiar first-order compensation $\log(1+\pi_{t-}\Delta_t^-R)-\pi_{t-}\Delta_t^-R$. For rougher return signals, the same interpretation remains valid after adding the higher-order terms prescribed by Theorem~\ref{thm:main}.

To summarize, the results of Subsections~\ref{subsec:stochastic_version}--\ref{subsec:finance_application} show that our main theorem is not only a pathwise extension of earlier It\^o-type formulas, but also a practical tool for stochastic analysis. It applies directly to random drivers with jumps, propagates through random differential equations, and yields explicit identities for quantities of interest in applications such as cumulative returns and log-wealth.

\vskip 0.2in

\noindent
{\bf Acknowledgments.} This work is supported by the National Natural Science Foundation of China (12571019), the Natural Science Foundation of Gansu Province (25JRRA644) and Innovative Fundamental Research Group Project of Gansu Province (23JRRA684).

\noindent
{\bf Declaration of interests. } The authors have no conflicts of interest to disclose.

\noindent
{\bf Data availability. } Data sharing is not applicable as no new data were created or analyzed.

\end{document}